\newtheorem{theorem}{Theorem}[section]
\newtheorem{lemma}[theorem]{Lemma}
\theoremstyle{definition}
\theoremstyle{remark}
\newtheorem{remark}[theorem]{Remark}
\numberwithin{equation}{section}
\begin{document}

\title[second mixed projection and projection centroid conjectures]
 {the second mixed projection problem and the projection centroid conjectures}
\author[M.N. Ivaki]{Mohammad N. Ivaki}
\address{Institut f\"{u}r Diskrete Mathematik und Geometrie, Technische Universit\"{a}t Wien, Wiedner Hauptstr. 8--10, 1040 Wien, Austria}
\email{mohammad.ivaki@tuwien.ac.at}

\dedicatory{}
\subjclass[2010]{}
\keywords{mixed projection body, projection centroid conjectures, inverse function theorem}
\begin{abstract}
We provide partial answers to the open problems 4.5, 4.6 of \cite{Gardner} and 12.9 of \cite{Lut1} regarding the classification of fixed points of the second mixed projection operator and iterates of the projection and centroid operators.
\end{abstract}

\maketitle
\section{Introduction}
The setting of this paper is $n$-dimensional Euclidean space $\mathbb{R}^n.$
A compact convex subset of $\mathbb{R}^{n}$ with non-empty interior is called a \emph{convex body}. The set of convex bodies in $\mathbb{R}^{n}$ is denoted by $\mathrm{K}^n$. Write $\mathrm{K}^n_e$ for the set of origin-symmetric convex bodies. Also, write $B^n$ and $S^{n-1}$ for the unit ball and the unit sphere of $\mathbb{R}^n$. Moreover, $\omega_{k}$ denotes the volume of $B^k.$

The support function of $K\in \mathrm{K}^n$, $h_K:S^{n-1}\to\mathbb{R}$, is defined by $$h_{K}(u)=\max\limits_{x\in K}x\cdot u.$$

Assume $K\in \mathrm{K}^n$, $n\geq 2.$ The $i$th projection body $\Pi_i K$ of $K$ is the origin-symmetric convex body whose support function, for $u\in S^{n-1},$ is given by
\[h_{\Pi_i K}(u)=\frac{1}{2}\int\limits_{S^{n-1}}|u\cdot x|dS_i(K,x),\]
where $S_i(K,\cdot)$ is the mixed area measure of $i$ copies of $K$ and $n-1-i$ copies of $B^n;$ see \cite[Section A3]{Gardner}.
Note that $\Pi_{n-1}$ coincides with the usual projection operator $\Pi$. We refer the reader to \cite{Lut}, especially Proposition 2, regarding the importance of classification of solutions to $\Pi_i^2K=cK+\vec{v}$, where $c$ is a positive constant and $\vec{v}$ is a vector. Let us remark that $\Pi_i B^n=\omega_{n-1}B^n$ and $\Pi_i^2B^n=\omega_{n-1}^nB^n.$ \cite[Problems 4.6]{Gardner} and \cite[Problems 12.7]{Lut1} ask which convex bodies $K$ are such that $\Pi_i^2K$ is homothetic to $K.$ The case $i=n-1$ has received partial answers; see \cite{Ivaki,SZ,W1}. Schneider \cite{Sch} deals with the case  $i=1$ and proves origin-centered balls are the only solutions to $\Pi_1^2K=cK.$ Grinberg and Zhang \cite{GZ} provide an alternative path to this result. Motivated by the work of Fish, Nazarov, Ryabogin and Zvavitch \cite{FNRZ} where the idea of considering the iteration problems locally was first considered,
here we prove local uniqueness theorems for fixed points of the second mixed projection operators for $1<i<n-1$:
\begin{theorem}\label{thm1}
Suppose $n\ge 3$ and $1< i< n-1.$ There exists $\varepsilon>0$ with the following property. If a convex body $K$ satisfies $\Pi_i^2K=cK+\vec{v}$ for some $c>0$ and $\vec{v}\in \mathbb{R}^n$, and $\|h_{\lambda K+\vec{a}}-1\|_{C^{2}}\leq \varepsilon$ for some $\lambda>0$ and $\vec{a}\in \mathbb{R}^n$, then $K$ is a ball.
\end{theorem}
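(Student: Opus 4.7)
The plan is to apply the implicit function theorem at the unit ball, after absorbing the translation and homothety freedoms into auxiliary parameters. Using the hypothesis, one may replace $K$ by $\lambda K+\vec a$ and assume $h_K=1+f$ with $\|f\|_{C^2}\le \varepsilon$; the constant $c$ is then close to $h_{\Pi_i^2B^n}=\omega_{n-1}^{i+1}$ and $\vec v$ is close to $0$. These auxiliary parameters will be fixed by requiring the degree-$0$ and degree-$1$ spherical harmonic components of $f$ to have prescribed values.

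The linearization of $\Pi_i^2$ at the ball is computed as follows. Since $dS_i(K,\cdot)/d\sigma$ is the normalized $i$th elementary symmetric function of the eigenvalues of the spherical Hessian $\bar\nabla^2 h_K:=\nabla_{S^{n-1}}^2 h_K+h_K\,g_{S^{n-1}}$, one obtains
\[
\frac{d}{dt}\bigg|_{t=0}\frac{dS_i(B^n+tf,\cdot)}{d\sigma}=\frac{i}{n-1}\bigl(\Delta_{S^{n-1}}f+(n-1)f\bigr).
\]
Feeding this into the cosine integral defining $h_{\Pi_i}$ and using $D\Pi_i|_{\omega_{n-1}B^n}=\omega_{n-1}^{i-1}\,D\Pi_i|_{B^n}$ (which follows from the $i$-homogeneity of $S_i$), the chain rule yields, on a spherical harmonic $Y_k$ of degree $k$,
\[
D\Pi_i^2|_{B^n}\,Y_k=\omega_{n-1}^{i-1}\mu_{k,i}^2\,Y_k,\qquad \mu_{k,i}:=\frac{i\,\gamma_k\,\bigl(n-1-k(n-2+k)\bigr)}{n-1},
\]
where $\gamma_k$ is the Funk--Hecke multiplier of the spherical cosine transform, vanishing for odd $k\ge 1$. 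Expanding $\Pi_i^2 K=cK+\vec v$ to first order and decomposing $f=\sum_k f_k$ produces the diagonal system $\bigl(\omega_{n-1}^{i-1}\mu_{k,i}^2-\omega_{n-1}^{i+1}\bigr)f_k=0$ for $k\ge 2$, the degrees $0$ and $1$ being absorbed by the freedom in $c$ and $\vec v$ (the $k=0$ equation is solvable because $i\ne 1$, and on $k=1$ the multiplier $\omega_{n-1}^{i-1}\mu_{1,i}^2-\omega_{n-1}^{i+1}=-\omega_{n-1}^{i+1}$ is nonzero).

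The heart of the argument, and the principal obstacle, is the spectral separation estimate
\[
|\mu_{k,i}|\ne\omega_{n-1}\qquad\text{for every even } k\ge 2\text{ and every integer } i\text{ with }1<i<n-1,
\]
which forces $f_k=0$ for all $k\ge 2$. Since $|\gamma_k|$ decays polynomially while $|n-1-k(n-2+k)|$ grows only quadratically, $|\mu_{k,i}|\to 0$ as $k\to\infty$, so only finitely many $k$ can produce a collision; these have to be eliminated by an explicit Funk--Hecke computation expressing $\gamma_k$ in terms of Gegenbauer polynomials, crucially exploiting that $i$ is an integer in $\{2,\dots,n-2\}$. Once the separation is verified, $D\Pi_i^2|_{B^n}$ is a compact operator on $C^2(S^{n-1})$ (it is smoothing via the cosine transform), so $D\Pi_i^2|_{B^n}-\omega_{n-1}^{i+1}\operatorname{Id}$ is Fredholm of index zero and, by spectral separation, has trivial kernel on the orthogonal complement of spherical harmonics of degree at most $1$; it is therefore an isomorphism there, and the implicit function theorem forces $f\equiv 0$, so that $K$ is a ball.
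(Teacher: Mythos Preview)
Your approach is essentially correct and follows the same overall strategy as the paper (linearize at the ball, do a Funk--Hecke spectral analysis, invoke an inverse/implicit function theorem), but it differs from the paper's execution in two notable respects.

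First, the spectral separation is far easier than you suggest. Using the identity $\Box\mathcal{C}=2(n-1)\omega_{n-1}\mathcal{R}$ (equation~(\ref{C and R}) in the paper), your multiplier collapses to $\mu_{k,i}=i\,\omega_{n-1}v_{k,n}$, where $v_{k,n}$ is the Radon--transform eigenvalue; the condition $|\mu_{k,i}|\neq\omega_{n-1}$ becomes simply $i|v_{k,n}|\neq 1$. Since $|v_{k,n}|\le 1/(n-1)$ for every $k\ge 2$ (with equality only at $k=2$) and $i<n-1$, one gets $i|v_{k,n}|<1$ for all $k\ge 2$ at once. No finitely-many-cases Gegenbauer verification is needed, and integrality of $i$ plays no role beyond $i\le n-2$. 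This is exactly the content of the paper's Lemma~\ref{lem: prev}.

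Second, the paper does not invoke compactness/Fredholm theory on $\Pi_i^2$ directly. Instead it passes to $\Pi_i^{2m}$ with $m\ge 4$ and, in Lemma~\ref{lem 4}, constructs an explicit $C^2$-inverse of $g\mapsto g-i^{2m}\mathcal{R}^{2m}g$: one first inverts in $L^2$ via spherical harmonics and then uses Strichartz's smoothing $\mathcal{R}^{2m}:L^2\to H_e^{m(n-2)}\subset C^2$ to upgrade regularity; the requirement $m\ge 4$ is exactly what makes this Sobolev embedding valid for all admissible $n$. The original $\Pi_i^2$ statement is then recovered by observing that $\Pi_i^2K=cK+\vec v$ implies $\Pi_i^8K=c'K+\vec v{\,}'$. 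Your Fredholm shortcut is legitimate (for $n\ge 4$, the only dimensions where $1<i<n-1$ has content, $\mathcal{R}^2$ is compact on $C^2(S^{n-1})$ via $C^2\hookrightarrow H^2\xrightarrow{\mathcal{R}^2}H^{n}\hookrightarrow C^2$), and it buys you a direct argument without iteration; the paper's route is more constructive and makes the inverse explicit.

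A minor point: your parenthetical about the $k=0$ and $k=1$ equations is muddled. Those degrees are not handled by a nonzero multiplier; they are simply absorbed by the freedom in $c$ and $\vec v$. The paper implements this cleanly by building the dilation and translation normalizations into the map $\mathcal{X}_{i,1}^{2m}$ and then adding $\pi_0+\pi_1$ to obtain an invertible derivative before applying the inverse function theorem.
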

A set $K$ in $\mathbb{R}^n$ is called star-shaped if it is non-empty and if $[0,x]\subset K$ for every $x\in K.$ For a compact star-shaped set $K,$ the radial function $\rho_K$ is defined by
\[\rho_K(x)=\max\{\lambda\ge0; \lambda x\in K\},\quad x\in\mathbb{R}^n-\{0\}.\]
A compact star-shaped set with a positive continuous radial function is called a star body.

The polar body, $K^{\ast}$, of a convex body $K$ with the origin in its interior is the convex body defined by
\[K^{\ast}=\{x\in \mathbb{R}^n; x\cdot y\leq 1~\text{for all}~y\in K\}.\]
It follows from the definition that $\rho_{K^{\ast}}=\frac{1}{h_K}$ on $S^{n-1}.$

The centroid body of a star body $K$ is an origin-symmetric convex body whose support function, for $u\in S^{n-1},$ is given by
\[h_{\Gamma K}(u)=\int\limits_{S^{n-1}}|u\cdot x|\rho_K^{n+1}(x)dx.\]
By a result of Petty, the centroid body of a convex body is always of class $C^2_+$ and $\Gamma\phi K=\phi \Gamma K$ for all $\phi\in \operatorname{Sl}_n;$ see \cite{Petty} and \cite[Theorem 9.1.3]{Gardner}.
For $K\in \mathrm{K}^n_e,$ $\Gamma K$ is the locus of centroids of halves of $K$ formed by slicing $K$ by hyperplanes through the origin.

The curvature image of $K\in \mathrm{K}^n_e $, $\Lambda K$, is the origin-symmetric convex body whose positive, continuous curvature function (for the definition of the curvature function see \cite[page 545]{Schneider}) $f_{\Lambda K}$ is given by
\[f_{\Lambda K}=\frac{1}{h_{K}^{n+1}}.\]
Moreover, $\Lambda \phi K=\phi\Lambda K$ for $\phi\in\operatorname{Sl}_n;$ see \cite[Lemma 7.12]{lutn}.

Two conjectures of Lutwak stated in \cite[Problem 12.9]{Lut1} are as follows: 1) If the convex body $K$ is such that $K$ and $\Gamma (\Pi K)^{\ast}$ are dilates, must $K$ be an ellipsoid? From now on, for simplicity, we set $(\Pi_i K)^{\ast}=\Pi_i^{\ast}K$. 2) If the star body $K$ is such that $K$ and $(\Pi \Gamma K)^{\ast}$ are dilates, does it follow that $K$ is an ellipsoid? By Petty's regularity theorem for centroid bodies, if $\Gamma (\Pi K)^{\ast}$ and $K$ are dilates then $K$ is origin-symmetric and of class $C^2_+.$ Also, by a result of Martinez-Maure \cite{Yves}, the projection body of a convex body of class $C^2_+$ is $C^2_+.$ Thus, if $K$ is a star body such that $(\Pi \Gamma K)^{\ast}$ and $K$ are dilates, then $K$ must be an origin-symmetric convex body of class $C^2_+.$ Furthermore, if $K\in \mathrm{K}^2_e$, then $\Pi K=2K^{\frac{\pi}{2}}$ (rotation of $K$ counter-clockwise through $90^{\circ}$). Therefore, $\Gamma \Pi^{\ast} K=\Gamma \frac{1}{2}(K^{\frac{\pi}{2}})^{\ast}=\frac{1}{4}\Pi\Lambda K^{\frac{\pi}{2}}=\frac{1}{4}\Pi(\Lambda K)^{\frac{\pi}{2}}=\frac{1}{2}\Lambda K$.
Consequently, if $\Gamma \Pi^{\ast} K=cK$ for some positive constant $c$, then $\Lambda K=2cK$. By a result of Petty \cite[Lemma 8.1]{Petty1}, $K$ is an origin-centered ellipse. Similarly, if $K\in \mathrm{K}^2_e$ and $(\Pi \Gamma K)^{\ast}$ and $K$ are dilates, then $K$ is an origin-centered ellipse. In conclusion, the answer to both questions in $\mathbb{R}^2$ is positive. For higher dimensions, we prove the following results:
\begin{theorem}\label{thm2}
Suppose $n\ge 3$. There exists $\varepsilon>0$ with the following properties.
 \begin{enumerate}
   \item Let $K$ be a convex body $K$ such that $(\Gamma \Pi^{\ast})^2K=cK+\vec{v}$ for some $c>0$ and $\vec{v}\in \mathbb{R}^n$, and $\|h_{\phi K+\vec{a}}-1\|_{C^{2}}\leq \varepsilon$ for some $\phi\in \operatorname{Gl}_n$ and $\vec{a}\in \mathbb{R}^n$, then $K$ is an ellipsoid.
   \item Suppose $1\leq i<n-1.$ If a convex body $K$ satisfies $(\Gamma \Pi_i^{\ast})^2K=cK+\vec{v}$ for some $c>0$ and $\vec{v}\in \mathbb{R}^n$, and $\|h_{\lambda K+\vec{a}}-1\|_{C^{2}}\leq \varepsilon$ for some $\lambda>0$ and $\vec{a}\in \mathbb{R}^n$, then $K$ is a ball.
\item If a star body $K$ satisfies $(\Pi\Gamma K)^{\ast}=cK$ for some $c>0$ and $\|\rho_{\phi K}-1\|\leq \varepsilon$ for some $\phi\in \operatorname{Gl}_n$, then $K$ is an origin-centered ellipsoid.
 \end{enumerate}
\end{theorem}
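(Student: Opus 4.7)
The approach, already invoked for Theorem \ref{thm1} and going back to the local philosophy of Fish--Nazarov--Ryabogin--Zvavitch, is to linearize each of the three relevant operators at the unit ball $B^n$ and apply the inverse function theorem on an appropriate Banach space, after quotienting by the symmetry group of the equation. In each case one identifies the symmetries that automatically produce solutions (origin-centered dilation; translation when the equation tolerates an additive vector $\vec v$; and the full $\operatorname{Sl}_n$-orbit of $B^n$ when the operator is $\operatorname{Sl}_n$-equivariant) and proves that these are the only directions in the kernel of the linearization. The natural function spaces are $C^{2,\alpha}(S^{n-1})$ for the perturbation $f=h_K-1$ in parts (1) and (2), and its even subspace for $f=\rho_K-1$ in part (3), where Petty's regularity and Martinez-Maure's theorem already force $K$ to be an origin-symmetric $C^2_+$ body.

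For part (3), the operator $K\mapsto(\Pi\Gamma K)^{\ast}$ is $\operatorname{Sl}_n$-equivariant: combine $\Gamma\phi=\phi\Gamma$ for $\phi\in\operatorname{Sl}_n$ with $\Pi\phi K=\phi^{-t}\Pi K$ and $(AL)^{\ast}=A^{-t}L^{\ast}$, and every origin-centered ellipsoid of the appropriate volume becomes a fixed point. Writing $\rho_K=1+f$ and expanding $f$ into spherical harmonics $\{H_k\}_{k\ge 0}$, each of the three building blocks linearizes at the ball to a spherical multiplier: $\Gamma$ acts by $(n+1)$ times the spherical cosine transform on $\rho^{n+1}$, $\Pi$ acts by the cosine transform composed with the linearization of the surface area measure in the support function (known explicitly on each $H_k$), and polarity at the ball sends $f\mapsto -f$. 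Composing these gives a diagonal multiplier $\mu_k$, and one recognizes the expected kernel: $k=0$ comes from dilation and $k=2$ from the $\operatorname{Sl}_n$-orbit of $B^n$ (odd harmonics being absent by origin-symmetry). Parts (1) and (2) are handled analogously by linearizing $(\Gamma\Pi^{\ast})^2$ and $(\Gamma\Pi_i^{\ast})^2$ at $B^n$; in part (1) the degree-one harmonics remain in the kernel due to translation invariance, while in part (2) the $\operatorname{Sl}_n$-equivariance degenerates to $\operatorname{O}(n)$-equivariance, so the degree-two multiplier must additionally be shown to be nonzero, yielding the stronger conclusion that $K$ is a ball rather than only an ellipsoid.

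The main obstacle is to verify non-vanishing of $\mu_k$ for all $k\ge 3$ in parts (1) and (3), and for all $k\ge 2$ in part (2). This reduces to checking non-cancellation in a product of spherical multipliers, each given by explicit Gamma-function ratios coming from the action of the cosine transform and of $\Pi_i$ on spherical harmonics, for which the standard reference is the material on spherical harmonics and projection bodies in \cite{Schneider}. Once the non-vanishing is in hand, restricting the operator to a slice transverse to the orbits of the symmetry group and applying the inverse function theorem gives a unique fixed point in a $C^{2,\alpha}$-neighborhood of $B^n$. The hypotheses $\|h_{\phi K+\vec a}-1\|_{C^{2}}\le\varepsilon$ or $\|\rho_{\phi K}-1\|\le\varepsilon$ provide a representative of $K$ in that neighborhood (the small loss from $C^{2}$ to $C^{2,\alpha}$ being absorbed by Schauder-type regularity of the fixed-point equation itself), forcing $K$ to be an affine image of $B^n$ — a ball in parts (2), and an ellipsoid in parts (1) and (3).
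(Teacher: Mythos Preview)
Your outline for parts (1) and (2) is essentially the paper's own argument: one linearizes the normalized operator $\mathcal{Y}_{i,1}^{2}$ at the constant function $1$, computes that the derivative acts diagonally on spherical harmonics as a constant times $g\mapsto g-\tfrac{i^{2}(n+1)^{2}}{4\omega_{n-1}^{2}}\mathcal{C}^{2}\mathcal{R}^{2}g$, identifies the kernel as $\mathcal{S}^{0}\oplus\mathcal{S}^{1}\oplus\mathcal{S}^{2}$ when $i=n-1$ and $\mathcal{S}^{0}\oplus\mathcal{S}^{1}$ when $1\le i<n-1$, and applies the inverse function theorem. The paper works in $C^{2}$ via Hamilton's version rather than in $C^{2,\alpha}$, but this is cosmetic.

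For part (3), however, the paper does \emph{not} linearize $K\mapsto(\Pi\Gamma K)^{\ast}$ directly. It uses the identity $\Gamma L=2\Pi\Lambda L^{\ast}$ to deduce from $(\Pi\Gamma K)^{\ast}=cK$ that $\Gamma\Pi^{\ast}(\phi\Gamma K)=c'\,\phi\Gamma K$, and then observes that the smoothing property of the cosine transform upgrades the hypothesis $\|\rho_{\phi K}-1\|\le\varepsilon$ (a mere $C^{0}$ bound) to $\|h_{\frac{1}{2\omega_{n-1}}\phi\Gamma K}-1\|_{C^{2}}\le c_{n}\varepsilon$, so that part (1) applies to $\phi\Gamma K$. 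Your direct route in $C^{2,\alpha}_{e}$ runs into a genuine gap here: the hypothesis in part (3) gives only $C^{0}$ closeness of $\rho_{\phi K}$ to $1$, and the ``Schauder-type regularity'' you invoke addresses a $C^{2}\to C^{2,\alpha}$ loss, not the $C^{0}\to C^{2}$ gap needed to land in the domain of your inverse function theorem. This can be repaired --- substituting the fixed-point equation back into itself and using that $\mathcal{C}$ gains two derivatives shows $\rho_{\phi K}$ is in fact $C^{2}$-close to $1$ --- but the paper's algebraic reduction to part (1) achieves the same effect in one line and spares you a second linearization.
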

\section*{Acknowledgment}
The work of the author was supported by Austrian Science Fund (FWF) Project M1716-N25 and the European Research Council (ERC) Project 306445. We would like to thank the referee and Emanuel Milman  for comments that led to improvements of this article.
\section{Preliminaries}
A convex body is said to be of class $C_+^2$ if its boundary hypersurface is two times continuously differentiable, in the sense of differential geometry, and has everywhere positive Gauss-Kronecker curvature.

Let $S_{k}$ be the group of all the permutations of the set $\{1,\cdots,k\}.$
The mixed discriminant of functions $f_i\in C^{2}(S^{n-1}), 1\leq i\leq n-1,$ is a multi-linear operator defined as
\begin{align}\label{def 1}
\mathcal{Q}(f_1,\cdots,f_{n-1}):=\frac{1}{(n-1)!}\sum_{\delta, \tau\in S_{n-1}}(-1)^{\operatorname{sgn}(\delta)+\operatorname{sgn}(\tau)}\prod_{i=1}^{n-1}(A[f_i])_{\delta(i)\tau(i)},
\end{align}
where in a local orthonormal frame of $S^{n-1}$ the entries of the matrix $A[f_k]$ are given by $(A[f_k])_{ij}=\nabla_i\nabla_jf_k+\delta_{ij}f_k$ and $\nabla$ is the covariant derivative on $S^{n-1}.$  From the above definition it follows that the operator $\mathcal{Q}$ is independent of the order of its arguments; see \cite[Lemma 2-12 ]{An} for other important properties of $\mathcal{Q}$.

We define the (ordered) mixed volume of $f_i\in C^{2}(S^{n-1}), 1\leq i\leq n,$ by
\[V(f_1,\cdots,f_n):=\frac{1}{n}\int\limits_{S^{n-1}}f_1\mathcal{Q}(f_2,\cdots,f_{n})dx.\]
In general we may have $V(f_1,\cdots,f_{n})\neq V(f_{\delta(1)},\cdots,f_{\delta(n)})$ for $\delta\in S_n,$ but if $A[f_k]$ are all positive definite then the equality holds; see \cite[Lemma 2-12]{An}.

The mixed projection of $f_i\in C^{2}(S^{n-1}), 1\leq i\leq n-1,$ is defined as
\begin{align*}\label{def 2}
\Pi(f_1,\cdots,f_{n-1})(u)&=\frac{1}{2}\int\limits_{S^{n-1}}|u\cdot x|\mathcal{Q}(f_1,\cdots,f_{n-1})(x)dx.
\end{align*}
\begin{remark}
Let $\{h_{K_i}\}_{1\leq i\leq n}$ be the support functions of convex bodies $\{K_i\}$ of class $C^2_+.$ Then
$V(h_{K_1},\cdots,h_{K_n})$ agrees with the usual definition of the mixed volume of $K_1,\cdots,K_n$ and also
$\Pi(h_{K_1},\cdots,h_{K_{n-1}})=h_{\Pi (K_1,\cdots,K_{n-1})}$.
\end{remark}
For convenience we will put
\begin{align*}
\mathcal{Q}(f,\cdots,f)&=\mathcal{Q}(f),\quad
\mathcal{Q}(\underbrace{f,\cdots,f}_{i~\text{times}},g,\cdots,g)=\mathcal{Q}_i(f,g),\\
\mathcal{Q}_i(f,1)&=\mathcal{Q}_i(f),\quad
\mathcal{Q}(\underbrace{f,\cdots,f}_{i-1~\text{times}},g,\underbrace{1,\cdots,1}_{n-1-i~\text{times}})=q_i(f,g),\\
V(f,\cdots,f)&=V(f),\quad
V(\underbrace{f,\cdots,f}_{i~\text{times}},g,\cdots,g)=V_i(f,g),\quad V_i(f,1)=V_i(f),\\
\Pi(f,\cdots,f)&=\Pi f,\quad
\Pi(\underbrace{f,\cdots,f}_{i~\text{times}},g,\cdots,g)=\Pi_i(f,g),\quad
\Pi_i(f,1)=\Pi_if.
\end{align*}
\subsection{Spherical harmonics}
Write $L^2(S^{n-1})$ for the Hilbert space of square-integrable real functions on $S^{n-1}$ equipped with scalar product
\[(f,g):=\int\limits_{S^{n-1}}fgdx.\]
Write $\|\cdot\|_2$ for the induced norm by this scalar product.

Spherical harmonics of degree $k$ are eigenfunctions of the spherical Laplace operator $\Delta$ with the eigenvalue $k(k+n-2).$ In fact, if $Y_k$ is such function then
\[\Delta Y_k=-k(k+n-2)Y_k.\]
The set $\mathcal{S}^k$ of spherical harmonics of degree $k$ is a vector subspace of $C(S^{n-1}).$ Moreover,
$\operatorname{dim}\mathcal{S}^k=N(n,k)=\frac{2k+n-2}{k+n-2}{k+n-2 \choose k}
.$ In each space $\mathcal{S}^k,$ choose an orthonormal basis
$\{Y_{k,1},\cdots,Y_{k,N(n,k)}\}$. For any $f\in L^2(S^{n-1})$ we write
\[\pi_kf:=\sum_{l=1}^{N(n,k)}\left(f,Y_{k,l}\right)Y_{k,l},\quad\pi_0f=\frac{1}{n\omega_n}\int\limits_{S^{n-1}}fdx.\]
The condensed harmonic expansion of $f$ is given by
\[f\sim \sum_{k=0}^{\infty}\pi_kf;\]
it converges to $f$ in the $L^2(S^{n-1})$-norm. In addition, for $f,g\in L^2(S^{n-1})$ we have
\[\sum_{k=0}^{\infty}\sum_{l=1}^{N(n,k)}\left(f,Y_{k,l}\right)\left(g,Y_{k,l}\right)=\left(f,g\right).\]
Note that $f\in L^2(S^{n-1})$ if and only if its condensed harmonic expansion satisfies
\[\sum_{k=0}^{\infty}\|\pi_kf\|_2^2<\infty.\]
For an excellent source on spherical harmonics see \cite{Groemer}.
\subsection{Radon transform and cosine transform}
Suppose that $f$ is a Borel function on $S^{n-1}.$ The spherical Radon transform (also known as the Funk Transform; see, for example, \cite{Helgason}) and cosine transform of $f$ are defined as follows
\begin{align*}
\mathcal{R}f(u)=\frac{1}{(n-1)\omega_{n-1}}\int\limits_{S^{n-1}\cap u^{\perp}}f(x)dx,\quad
\mathcal{C}f(u)=\int\limits_{S^{n-1}}|u\cdot x|f(x)dx.
\end{align*}
The transformations $\mathcal{R}$ and $\mathcal{C}$ are self-adjoint, in the sense that if $f$ and $g$ are bounded Borel functions on $S^{n-1},$ then
\[\int\limits_{S^{n-1}}f(x)\mathcal{R}g(x)dx=\int\limits_{S^{n-1}}g(x)\mathcal{R}f(x)dx,~\int\limits_{S^{n-1}}f(x)\mathcal{C}g(x)dx=
\int\limits_{S^{n-1}}g(x)\mathcal{C}f(x)dx.\]
Radon transform and cosine transform of a spherical harmonic of degree $k$ are given by
$$\mathcal{R}Y_k=v_{k,n}Y_k,\quad v_{k,n}=(-1)^{\frac{k}{2}}\cdot\left\{
                                    \begin{array}{ll}
                                      \frac{1\cdot3\cdots(k-1)}{(n-1)(n+1)\cdots(n+k-3)}, & k\geq 4~\hbox{even;} \\
                                      1, & k=0; \\
                                      \frac{1}{n-1}, & k=2; \\
                                      0, & k~ \hbox{odd}
                                    \end{array}
                                  \right.
$$
and
$$\mathcal{C}Y_k=w_{k,n}Y_k,\quad w_{k,n}=(-1)^{\frac{k-2}{2}}\omega_{n-1}\cdot\left\{
                                    \begin{array}{ll}
                                      2\frac{1\cdot3\cdots(k-3)}{(n+1)(n+3)\cdots(n+k-1)}, & k\geq 4~\hbox{even;} \\
                                      2, & k=0; \\
                                      \frac{2}{n+1}, & k=2; \\
                                      0, & k~ \hbox{odd},
                                    \end{array}
                                  \right.
$$
see \cite[Lemma 3.4.5, 3.4.7]{Groemer}. The following relation between Radon transform and $\Box:=\Delta+n-1$ is established in \cite[Proposition 2.1]{Goodey}:
\begin{align}\label{C and R}
\Box\mathcal{C}=2(n-1)\omega_{n-1}\mathcal{R}.
\end{align}
Let $H^s(S^{n-1}),$ $s\ge 0$, be the spaces of those functions for which the spherical harmonic expansion satisfies
$
\|f\|_{H^s}^2:=\sum_{k=0}^{\infty}(1+k^2)^s\|\pi_kf\|_2^2<\infty.
$
The following results about the smoothing property of $\mathcal{R}, ~\mathcal{C}$ are proved in \cite{STRICHARTZ}:
\begin{align}\label{STRICHARTZ R}
\|\mathcal{R}f\|_{H^{s+\frac{n-2}{2}}}\leq a_{s,n}\|f\|_{H^s},\quad
\|\mathcal{C}f\|_{H^{s+\frac{n+2}{2}}}\leq b_{s,n}\|f\|_{H^s}
\end{align}
for some positive constants depending on $s$ and $n.$
Let us put $$H_e^s(S^{n-1})=\{f\in H^s(S^{n-1}); f(x)=f(-x), \forall x\in S^{n-1}\}.$$ Strichartz proved that $H_e^s(S^{n-1})$ is precisely the even functions $f\in L^2(S^{n-1})$ with derivatives derivations up to order $s$ in $L^2(S^{n-1});$ see \cite[Pages 721-722]{STRICHARTZ}. Therefore, $H_e^s(S^{n-1})$, $s\ge0$, are Sobolev spaces.
\subsection{The directional derivative}
Let $B_1$ and $B_2$ be two Banach spaces, $U$ an open subset of $B_1.$ Suppose $P:U\subset B_1\to B_2$ is continuous. The directional derivative of $P$ at $f\in U$ in direction $g\in B_1$ is defined by
\[DP\{f,g\}=\lim_{t\to0}\frac{P(f+tg)-P(f)}{t}.\]
If the limit exists, $P$ is said to be differentiable at $f$ in direction $g.$ We say $P$ is $C^1$ in $U$, if the limit exists for all $f\in U$ and $g\in B_1$ and if $DP: (U\subset B_1)\times B_1\to B_2$ is continuous. The second derivative of $P$ is the derivative of the first derivative:
\[D^2P\{f,g_1,g_2\}=\lim_{t\to0}\frac{DP\{f+tg_2,g_1\}-DP\{f,g_1\}}{t}.\]
We say $P$ is $C^2$ if $D^2P$ exists and
$D^2P:(U\subset B_1)\times B_1\times B_1\to B_2$
is jointly continuous on the product. Similar definitions apply to the higher derivatives. The $k$th derivative $D^kP\{f,g_1,\cdots,g_k\}$ will be regarded as a map
\[D^kP:(U\subset B_1)\times \underbrace{B_1\times\cdots\times B_1}_{k~\text{times}}\to B_2.\]
We say $P$ is of class $C^k$, if $D^kP$ exists and is continuous. We say a map is $C^{\infty}$ if it is $C^k$ for all $k.$

If $P:U\subset B_1\to V\subset B_2$ is a map between open subsets of Banach spaces, we define its tangent map $TP:(U\subset B_1)\times B_1\to (V\subset B_2)\times B_2 $ by
\[TP(f,g)=(P(f), DP\{f,g\}).\]
Note that $TP$ is defined and continuous if and only if $P$ is $C^1$. Let $T^{k}P=T(T^{k-1}P)$, then $T^kP$ is defined and continuous if and only if $P$ is $C^k.$ If $P$ and $Q$ are $C^k$, then their composition is also $C^k$ and $T^k(P\circ Q)=T^kP\circ T^kQ;$ see \cite[Theorem 3.6.4]{Hamilton}.
\section{The mixed projection problem}
If $f\in C(S^{n-1})$, it follows from \cite[Theorem 1.1]{Yves} that $\mathcal{C}f \in C^2(S^{n-1}).$ In particular,
if $f\in C^2(S^{n-1})$, then $\Pi_i^kf \in C^2(S^{n-1}).$
\begin{lemma}\label{lemma reg}
The operator $\Pi_i^k:C^2(S^{n-1})\to C^2(S^{n-1})$ is $C^{\infty}$.
\end{lemma}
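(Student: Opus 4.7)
The plan is to factor $\Pi_i$ through the cosine transform and the mixed discriminant polynomial, show that each factor is $C^{\infty}$ between the appropriate Banach spaces, and then apply the chain rule to conclude.

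First I would observe that the map $f \mapsto A[f]$, where $(A[f])_{ij} = \nabla_i \nabla_j f + \delta_{ij} f$, is a bounded linear operator from $C^2(S^{n-1})$ into the space of continuous symmetric-matrix-valued functions on $S^{n-1}$ (equipped with the sup norm on any matrix norm). In particular it is $C^{\infty}$ in the directional-derivative sense of Section~2. Next, reading off the definition \eqref{def 1}, the function $\mathcal{Q}_i(f,1)$ is a homogeneous polynomial of degree $i$ in the entries of the matrix $A[f]$, evaluated pointwise. A homogeneous polynomial of fixed degree between Banach spaces is automatically $C^{\infty}$ (all derivatives beyond the degree vanish and the lower derivatives are the obvious multilinear symmetrizations), so the composition
\[
f \;\longmapsto\; A[f] \;\longmapsto\; \mathcal{Q}_i(f,1) \in C^0(S^{n-1})
\]
is a $C^{\infty}$ map $C^2(S^{n-1}) \to C^0(S^{n-1})$.

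Second, $\Pi_i f = \tfrac{1}{2}\,\mathcal{C}\bigl(\mathcal{Q}_i(f,1)\bigr)$, and by the result of Martinez-Maure \cite{Yves} cited just above, $\mathcal{C}$ maps $C(S^{n-1})$ continuously into $C^2(S^{n-1})$. Since $\mathcal{C}$ is linear, it is $C^{\infty}$ as a map $C^0(S^{n-1}) \to C^2(S^{n-1})$: its first derivative is itself and all higher derivatives vanish. Composing, $\Pi_i : C^2(S^{n-1}) \to C^2(S^{n-1})$ is $C^{\infty}$ as a composition of $C^{\infty}$ maps, using Hamilton's chain rule $T^k(P\circ Q) = T^kP \circ T^kQ$ \cite[Theorem 3.6.4]{Hamilton}.

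Finally, iterating this fact, $\Pi_i^k = \Pi_i \circ \cdots \circ \Pi_i$ ($k$ times) is again a composition of $C^{\infty}$ maps $C^2(S^{n-1}) \to C^2(S^{n-1})$, hence $C^{\infty}$.

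The only real content here is checking that the pieces land in the correct function spaces; the bulk of the smoothness assertion is formal once one recognizes $\mathcal{Q}_i(\cdot,1)$ as a polynomial and $\mathcal{C}$ as a bounded linear operator with the right smoothing property. The main obstacle, if any, is verifying at the Banach-space level that the pointwise polynomial $f \mapsto \mathcal{Q}_i(f,1)$ satisfies the uniform bound needed to be continuous from $C^2$ to $C^0$; but this follows immediately from the fact that the $C^2$ norm controls all entries of $A[f]$ uniformly on $S^{n-1}$.
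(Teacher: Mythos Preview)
Your proof is correct and follows essentially the same route as the paper: factor $\Pi_i=\tfrac{1}{2}\,\mathcal{C}\circ\mathcal{Q}_i$, use that $\mathcal{C}:C(S^{n-1})\to C^2(S^{n-1})$ is linear (hence $C^\infty$), and exploit the polynomial/multilinear structure of $\mathcal{Q}_i$ to get smoothness of that factor. The only difference is presentational: the paper computes the directional derivatives of $\mathcal{Q}_i$ explicitly via multilinearity (obtaining $D^m\mathcal{Q}_i\{f,g_1,\ldots,g_m\}=\tfrac{i!}{(i-m)!}\mathcal{Q}(f,\ldots,f,g_1,\ldots,g_m,1,\ldots,1)$, which it then reuses in the next lemma), whereas you invoke the general fact that a continuous polynomial between Banach spaces is $C^\infty$ without writing the derivatives down.
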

\begin{proof}
Since $\Pi_i=\frac{1}{2}\mathcal{C}\circ \mathcal{Q}_i$ and $\mathcal{C}: C(S^{n-1})\to C^2(S^{n-1})$ is linear, it suffices to show that the operator $\mathcal{Q}_i: C^2(S^{n-1})\to C(S^{n-1})$ is $C^m$ for all $m.$
Since $\mathcal{Q}$ is multi-linear, we have
\begin{align*}
\mathcal{Q}_i(f+tg_1)-\mathcal{Q}_i(f)
&=it\mathcal{Q}_i(\underbrace{f,\cdots,f}_{i-1~\text{times}},g_1,1,\cdots,1)+o(t^2).
\end{align*}
Therefore, \begin{align}\label{first derv}D\mathcal{Q}_i\{f,g_1\}=i\mathcal{Q}(\underbrace{f,\cdots,f}_{i-1~\text{times}},g_1,1\cdots,1)=iq_i(f,g_1).
\end{align}
Consequently,
$$D\mathcal{Q}_i\{f+tg_2,g_1\}-D\mathcal{Q}_i\{f,g_1\}=i(i-1)t\mathcal{Q}(\underbrace{f,\cdots,f}_{i-2~\text{times}},g_1,g_2,1,\cdots,1)+o(t^2).$$
By induction we obtain
$$D^m\mathcal{Q}_i\{f,g_1,\cdots,g_m\}=\left\{
  \begin{array}{ll}
   \frac{i!}{(i-m)!}\mathcal{Q}(\underbrace{f,\cdots,f}_{i-m~\text{times}},g_1,g_2,\cdots,g_m,1,\cdots,1) & \hbox{$m\leq i$;} \\
    0 & \hbox{$m>i$.}
  \end{array}
\right.
$$
This explicit expression shows that $D^m\mathcal{Q}_i$ is defined and continuous.
\end{proof}
Suppose $f\in C^2(S^{n-1})$ is the support function of a convex body of class $C^2_+.$
Let $\tilde{U}_{f}$ be a $C^2(S^{n-1})$-neighborhood of $0$ such that for every $g\in \tilde{U}_f$, $f+g$ is the support function of a convex body of class $C^2_+.$ The Corollary on page 13 of \cite{Yves} implies that $\Pi_i^k(f+g)$, for any $k,$ is the support function of a $C^2_+$ convex body.
\begin{lemma}\label{lem 1} Suppose $h\in C^2(S^{n-1})$ is the support function of a convex body of class $C^2_+.$ For $g\in C^2(S^{n-1})$ we have
\begin{align*}
\frac{d}{dt}\Big|_{t=0}\Pi^{k}_i(h+tg)=
\frac{i^k}{2^k}\mathcal{C}q_i(\Pi_i^{k-1}h,\mathcal{C}q_i(\Pi_i^{k-2}h,\mathcal{C}q_i(\cdots,\mathcal{C}q_i(h,g)\cdots)))
\end{align*}
and
$$\frac{d}{dt}\Big|_{t=0}\Pi^{k}_i(1+tg)=\frac{\left(\frac{V(\Pi^{k}1)}{\omega_n}\right)^{\frac{1}{n}}i^k}{2^k\omega_{n-1}^k(n-1)^k}(\mathcal{C}\Box)^{(k)} g,$$
where $(\mathcal{C}\Box)^{(k)} g=\underbrace{\mathcal{C}\Box\cdots\mathcal{C}\Box}_{k~\text{times}} g.$
Furthermore,
\begin{align*}
&\frac{d}{dt}\Big|_{t=0}V_{i+1}(\Pi^{k}_i(h+tg))\\
&=\frac{i^{k}(i+1)}{2^{k-1}n}\int\limits_{S^{n-1}}gq_i(h,\mathcal{C}q_i(\Pi_i h,
\mathcal{C}q_i(\cdots,\mathcal{C}q_i(\Pi^{k-1}_ih,\Pi^{k+1}_ih)\cdots)))dx
\end{align*}
and
\[\frac{d}{dt}\Big|_{t=0}V_{i+1}(\Pi_i^{k}(1+tg))=\frac{\left(\frac{V(\Pi^{k}1)}{\omega_n}\right)^{\frac{1}{n}}i^{k}(i+1)}{2^k\omega_{n-1}^k(n-1)^kn} \left((\Box\mathcal{C})^{(k)}(\Pi^{k}1)^{i}\right)\int\limits_{S^{n-1}}gdx.\]
\end{lemma}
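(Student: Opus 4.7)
The plan is to derive all four formulas via the iterated chain rule for Fr\'echet derivatives, paired with self-adjointness of $\mathcal{C}$ and symmetry of the mixed-volume functional. For the first formula I start from the factorization $\Pi_i=\frac{1}{2}\mathcal{C}\circ\mathcal{Q}_i$. Lemma \ref{lemma reg}, in the form of \eqref{first derv}, gives $D\mathcal{Q}_i\{f,g\}=iq_i(f,g)$, and linearity of $\mathcal{C}$ then yields $D\Pi_i\{f,g\}=\frac{i}{2}\mathcal{C}q_i(f,g)$. Inducting on $k$ via the chain rule $D\Pi_i^k\{h,g\}=D\Pi_i\bigl\{\Pi_i^{k-1}h,\,D\Pi_i^{k-1}\{h,g\}\bigr\}$ directly produces the stated nested expression $\frac{i^k}{2^k}\mathcal{C}q_i(\Pi_i^{k-1}h,\mathcal{C}q_i(\cdots,\mathcal{C}q_i(h,g)\cdots))$.

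For the second formula I would specialize to $h\equiv 1$. Multilinearity of $\mathcal{Q}$ gives $q_i(c,X)=c^{i-1}q_i(1,X)$ for constant $c$, and $q_i(1,X)=\mathcal{Q}(A[X],I,\ldots,I)=\frac{1}{n-1}\operatorname{tr} A[X]=\frac{1}{n-1}\Box X$. Since $\mathcal{C}$ acts on constants as multiplication by $2\omega_{n-1}$, the function $\Pi_i^j 1$ is a constant $a_j$ determined by $a_0=1$, $a_{j+1}=a_j^i\omega_{n-1}$. Substituting into step one yields $\bigl(\tfrac{i}{2(n-1)}\bigr)^{\!k}\prod_{j=0}^{k-1}a_j^{i-1}\,(\mathcal{C}\Box)^{(k)}g$, and the identity $a_j^{i-1}=a_{j+1}/(a_j\omega_{n-1})$ forces the product to telescope to $a_k/\omega_{n-1}^k$. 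Since $\Pi_i^k 1$ represents a ball of radius $a_k$, one has $(V(\Pi^k 1)/\omega_n)^{1/n}=a_k$, matching the claim.

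The third formula follows from the chain rule applied to $V_{i+1}\circ\Pi_i^k$ together with multilinearity of $V$:
\[
\frac{d}{dt}\Big|_{t=0}V_{i+1}(\Pi_i^k(h+tg))=\frac{i+1}{n}\int D\Pi_i^k\{h,g\}\,\mathcal{Q}_i(\Pi_i^k h)\,dx.
\]
Substituting step one produces $k$ nested $\mathcal{C}q_i$ layers sandwiched against $\mathcal{Q}_i(\Pi_i^k h)$. I would unwind these inside-out by alternating three moves: (i) self-adjointness of $\mathcal{C}$ to peel off one $\mathcal{C}$ at a time; (ii) the identity $\mathcal{C}\mathcal{Q}_i(\Pi_i^{j}h)=2\Pi_i^{j+1}h$ to collapse $\mathcal{C}$ with the adjacent $\mathcal{Q}_i$ into the next iterate; and (iii) the swap $\int X\,q_i(f,Y)\,dx=\int Y\,q_i(f,X)\,dx$, a consequence of the symmetry of mixed volumes on $C^2(S^{n-1})$ established via integration by parts. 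After $k$ such swaps, $g$ has migrated to the outermost slot and $\Pi_i^{k+1}h$ has settled into the innermost one, delivering the displayed expression.

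The fourth formula can be obtained either by specializing the third at $h\equiv 1$ (applying $q_i(c,X)=c^{i-1}q_i(1,X)$ at every level) or, more transparently, by combining the second formula with the chain-rule identity $\frac{d}{dt}\big|_0V_{i+1}(\Pi_i^k(1+tg))=\frac{(i+1)a_k^i}{n}\int D\Pi_i^k\{1,g\}\,dx$ and then using the adjoint pair $(\mathcal{C}\Box)^*=\Box\mathcal{C}$ to transfer $(\mathcal{C}\Box)^{(k)}$ off $g$ and onto the constant $(\Pi^k 1)^i$. I do not foresee any conceptual obstacle; the main difficulty will be purely combinatorial bookkeeping---tracking the $(i/(2(n-1)))^k$ prefactors across iterations, keeping straight which slot each self-adjointness swap targets, and verifying that the mixed-volume symmetry is invoked at the level of the integral identity (which holds for all $C^2$ inputs) rather than at the level requiring positive definiteness of every Hessian appearing in the tower.
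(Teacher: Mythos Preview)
Your proposal is correct and follows essentially the same route as the paper: iterate the tangent map $T\Pi_i(h,g)=(\Pi_ih,\tfrac{i}{2}\mathcal{C}q_i(h,g))$ for the first formula, then unwind the volume derivative using self-adjointness of $\mathcal{C}$, the identity $\mathcal{C}\mathcal{Q}_i(\Pi_i^{j}h)=2\Pi_i^{j+1}h$, and the integration-by-parts swap $\int Xq_i(f,Y)=\int Yq_i(f,X)$ (this is precisely \cite[Lemma~2-12, items (3),(4)]{An}). The only difference is cosmetic: for the two specializations at $h\equiv 1$ the paper simply cites \cite[Lemmas~3.2,~3.4]{Ivaki}, whereas you spell out the telescoping product $\prod a_j^{i-1}=a_k/\omega_{n-1}^k$ and the adjoint transfer $(\mathcal{C}\Box)^{(k)}\leftrightarrow(\Box\mathcal{C})^{(k)}$ explicitly.
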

\begin{proof}
Note that $T\Pi_i^{k}=\underbrace{T\Pi_i\circ\cdots\circ T\Pi_i}_{k~\text{times}}$ and by (\ref{first derv}) we have $$T\Pi_i(h,g)=(\Pi_ih,\frac{i}{2}\mathcal{C}q_i(h,g)).$$ Thus
\[T\Pi_i^{2}(h,g)=T\Pi_i(\Pi_ih,\frac{i}{2}\mathcal{C}q_i(h,g))=(\Pi_i^2h,\frac{i}{2}\mathcal{C}q_i(\Pi_ih,\frac{i}{2}\mathcal{C}q_i(h,g))).\]
The general claim follows by induction.

Note for a fixed $g$, there exists $\varepsilon>0$ small enough, such that for any $t\in (-\varepsilon,\varepsilon),$  $tg\in \tilde{U}_{h}$; therefore, $A[h+tg]$ is positive definite and it is the support function of a convex body of class $C^2_+$. To calculate $\frac{d}{dt}\Big|_{t=0}V_{i+1}(\Pi^{k}_i(h+tg))$, we may restrict our attention only to the range of $t\in (-\varepsilon,\varepsilon)$ (although, the definition of the derivative implicitly considers only small $t$ and so such care was not needed).
Recall that $$V_{i+1}(\Pi^{k}_i(h+tg))=\frac{1}{n}\int\limits_{S^{n-1}}\Pi_i^k(h+tg)\mathcal{Q}_i(\Pi_i^k(h+tg))dx.$$
Therefore,
\begin{align*}
\frac{d}{dt}\Big|_{t=0}V_{i+1}(\Pi^{k}_i(h+tg))=&\frac{1}{n}\int\limits_{S^{n-1}}\mathcal{Q}_i(\Pi_i^kh)\frac{d}{dt}\Big|_{t=0}\Pi_i^k(h+tg)
dx\\
&+\frac{1}{n}\int\limits_{S^{n-1}}\Pi_i^kh\frac{d}{dt}\Big|_{t=0}\mathcal{Q}_i(\Pi_i^k(h+tg))dx\\
=&\frac{1}{n}\int\limits_{S^{n-1}}\mathcal{Q}_i(\Pi_i^kh)\frac{d}{dt}\Big|_{t=0}\Pi_i^k(h+tg)
dx\\
&+\frac{i}{n}\int\limits_{S^{n-1}}\Pi_i^khq_i(\Pi_i^kh,\frac{d}{dt}\Big|_{t=0}\Pi_i^k(h+tg))dx.
\end{align*}
Using \cite[Lemma 2-12, items (3),(4)]{An}, we get
\begin{align*}
\frac{d}{dt}\Big|_{t=0}V_{i+1}(\Pi^{k}_i(h+tg))=
&\frac{i+1}{n}\int\limits_{S^{n-1}}\mathcal{Q}_i(\Pi_i^kh)\frac{d}{dt}\Big|_{t=0}\Pi_i^k(h+tg)dx.
\end{align*}
Since the operator $\mathcal{C}$ is self-adjoint and $\mathcal{C}\mathcal{Q}_i(\Pi^k_ih)=2\Pi_i^{k+1}h$, in view of \cite[Lemma 2-12, items (3),(4)]{An} the claim follows. For the special case of $h=1,$ we refer the reader to the proofs of \cite[Lemmas 3.2, 3.4]{Ivaki}.
\end{proof}
Fix $i,m\in \mathbb{N}.$ Suppose $f\in C^2(S^{n-1})$ is the support function of a convex body of class $C^2_+.$ Define a map by
\begin{align*}
\mathcal{X}_{i,f}^{m}&: \tilde{U}_{f}\subset C^2(S^{n-1})\to C^2(S^{n-1})\\
\mathcal{X}_{i,f}^{m}(g)(u):=&-\Pi_i^{m}(f+g)(u)+\left(\frac{V_{i+1}(\Pi_i^{m}(f+g))}{V_{i+1}(f+g)}\right)^{\frac{1}{1+i}}(f+g)(u)\\
&-u\cdot\int\limits_{S^{n-1}}\left(\frac{V_{i+1}(\Pi_i^{m}(f+g))}{V_{i+1}(f+g)}\right)^{\frac{1}{1+i}}(f+g)(x)xdx.
\end{align*}
By Lemmas \ref{lemma reg} and \ref{lem 1}, $\mathcal{X}_{i,f}^m$ is $C^{\infty}$. Lemma \ref{lem 1} yields an explicit expression for $D\mathcal{X}_{i,1}^{2m}\{0,\cdot\}:$
\begin{lemma}\label{lem: prev}
For any $g\in C^2(S^{n-1})$ we have
\begin{align*}
D\mathcal{X}_{i,1}^{2m}\{0,g\}(u)=&(\Pi^{2m}1)\left(g(u)-i^{2m}\mathcal{R}^{2m} g(u)+\frac{i^{2m}-1}{n\omega_n}\int\limits_{S^{n-1}} gdx\right)\\
&-(\Pi^{2m}1)u\cdot\int\limits_{S^{n-1}}g(x)xdx.
\end{align*}
Furthermore, if $1\leq i<n-1$, then $\operatorname{dim}\operatorname{Ker}D\mathcal{X}_{i,1}^{2m}\{0,\cdot\}=n+1.$
\end{lemma}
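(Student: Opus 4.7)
The plan is to compute $D\mathcal X_{i,1}^{2m}\{0,\cdot\}$ in closed form by linearizing each of the three summands of $\mathcal X_{i,1}^{2m}$ at $g=0$, and then to read off the kernel by expanding in spherical harmonics. I would write $\alpha := \Pi_i^{2m}(1)$ (a positive constant, since iterated projection bodies of a ball are balls) and $\lambda(g) := (V_{i+1}(\Pi_i^{2m}(1+g))/V_{i+1}(1+g))^{1/(i+1)}$, so that $\lambda(0)=\alpha$.

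For the first summand, the $h=1$ case of Lemma~\ref{lem 1} expresses $\frac{d}{dt}|_{t=0}\Pi_i^{2m}(1+tg)$ as a constant multiple of $(\mathcal{C}\Box)^{(2m)}g$. The key reduction is that $\mathcal{C}$ and $\Box$ act by scalars on each $\mathcal{S}^k$, hence commute on $C^2(S^{n-1})$; iterating~(\ref{C and R}) then collapses $(\mathcal{C}\Box)^{(2m)}$ to $(2(n-1)\omega_{n-1})^{2m}\mathcal R^{2m}$, and the constants cancel to leave $\alpha i^{2m}\mathcal R^{2m}g$. For the middle summand I would apply the quotient rule to $\lambda$: the numerator derivative comes from the last identity of Lemma~\ref{lem 1} at $h=1$, together with the observation that $(\Box\mathcal{C})^{(2m)}$ fixes the constant $\alpha^i$ (because $\mathcal R$ fixes constants); the denominator derivative reduces via multilinearity of $V$ and $\mathcal{Q}(1)=1$ to $\frac{i+1}{n}\int g\,dx$. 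A short bookkeeping produces $\lambda'(0)=\frac{\alpha(i^{2m}-1)}{n\omega_n}\int g\,dx$, so the middle summand contributes $\alpha g(u)+\frac{\alpha(i^{2m}-1)}{n\omega_n}\int g\,dx$. The third summand's derivative loses its $\lambda'(0)$-part because $\int_{S^{n-1}}x\,dx=0$, leaving $-\alpha\, u\cdot\int g(x)\,x\,dx$. Assembling these three contributions yields the stated closed form.

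For the kernel, I would expand $g=\sum_{k\ge 0}\pi_k g$ and observe that $\mathcal{R}^{2m}$, $g\mapsto\int g\,dx$, and $g\mapsto\int g(x)\,x\,dx$ all respect this decomposition, with the two integrals annihilating $\pi_k g$ for $k\ge 2$. Hence on $\mathcal S^k$ with $k\ge 2$ the operator acts by the scalar $\alpha(1-(iv_{k,n})^{2m})$. For odd $k\ge 3$ the vanishing $v_{k,n}=0$ makes this $\alpha\ne 0$; for even $k\ge 2$ the recursion $|v_{k,n}|/|v_{k-2,n}|=(k-1)/(n+k-3)\le 1$ (valid since $n\ge 3$) gives the sharp bound $|v_{k,n}|\le |v_{2,n}|=1/(n-1)$, so under $i<n-1$ one has $|iv_{k,n}|<1$ and the scalar is nonzero. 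Therefore no harmonic of degree $\ge 2$ lies in the kernel. The degrees $0$ and $1$ contribute the full $n+1$ dimensions: on constants $\pi_0 g = c$ the explicit cancellation $\alpha(1-i^{2m})c+\alpha(i^{2m}-1)c=0$ (dilation invariance) produces one dimension, and on linear $\pi_1 g$ the vanishing $\mathcal R\pi_1 g=0$ combined with the subtractive term encodes the infinitesimal translation invariance of $\Pi_i^{2m}$ and $V_{i+1}$, giving another $n$ dimensions.

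The main obstacle I anticipate is the degree-$k\ge 2$ nondegeneracy: the strict inequality $|iv_{k,n}|<1$ is precisely where the hypothesis $i<n-1$ enters, and it fails sharply at $i=n-1$, where $iv_{2,n}=-1$ introduces a large family of degree-$2$ harmonics into the kernel. This is consistent with the fact that the classical projection operator $\Pi=\Pi_{n-1}$ has a much richer fixed-point structure and requires a different treatment.
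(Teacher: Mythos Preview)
Your proposal is correct and follows the paper's approach: invoke Lemma~\ref{lem 1} at $h=1$, collapse $(\mathcal C\Box)^{(2m)}$ to $(2(n-1)\omega_{n-1})^{2m}\mathcal R^{2m}$ via~(\ref{C and R}), and identify the kernel harmonically---indeed, your recursion bound $|v_{k,n}|\le |v_{2,n}|=1/(n-1)$ makes explicit the nondegeneracy for $k\ge 2$ that the paper merely asserts in~(\ref{kernel}). One small slip of phrasing: $(\Box\mathcal C)^{(2m)}$ does not \emph{fix} the constant $\alpha^i$ but multiplies it by $(2(n-1)\omega_{n-1})^{2m}$, which then cancels the prefactor in the last identity of Lemma~\ref{lem 1}; your stated value $\lambda'(0)=\alpha(i^{2m}-1)(n\omega_n)^{-1}\int g\,dx$ is nonetheless correct.
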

\begin{proof}Using Lemma \ref{lem 1}, we calculate
\begin{multline*}
D\mathcal{X}_{i,1}^{2m}\{0,g\}(u)=\Biggl(-\frac{\left(\frac{V(\Pi^{2m}1)}{\omega_n}\right)^{\frac{1}{n}}i^{2m}}{2^{2m}\omega_{n-1}^{2m}(n-1)^{2m}}(\mathcal{C}\Box)^{(2m)}g+\left(\frac{V_{i+1}(\Pi^{2m}1)}{\omega_n}\right)^{\frac{1}{i+1}}g\\
+\left(\frac{V_{i+1}(\Pi^{2m}1)}{\omega_n}\right)^{\frac{1}{i+1}-1}\frac{\left(\frac{V(\Pi^{2m}1)}{\omega_n}\right)^{\frac{1}{n}}i^{2m}}{2^{2m}\omega_{n-1}^{2m}(n-1)^{2m}} \left((\Box\mathcal{C})^{(2m)}(\Pi^{2m}1)^{i}\right)\frac{\int\limits_{S^{n-1}}gdx}{n\omega_n}\\
-\left(\frac{V_{i+1}(\Pi^{2m}1)}{\omega_n}\right)^{\frac{1}{i+1}}\frac{\int\limits_{S^{n-1}}gdx}{n\omega_n}\Biggl)(u)\\
-u\cdot \Biggl\{\int\limits_{S^{n-1}}\biggl( \left(\frac{V_{i+1}(\Pi^{2m}1)}{\omega_n}\right)^{\frac{1}{i+1}}g\\
+\left(\frac{V_{i+1}(\Pi^{2m}1)}{\omega_n}\right)^{\frac{1}{i+1}-1}\frac{\left(\frac{V(\Pi^{2m}1)}{\omega_n}\right)^{\frac{1}{n}}i^{2m}}{2^{2m}\omega_{n-1}^{2m}(n-1)^{2m}} \left((\Box\mathcal{C})^{(2m)}(\Pi^{2m}1)^{i}\right)\frac{\int\limits_{S^{n-1}}gdx}{n\omega_n}\\
-\left(\frac{V_{i+1}(\Pi^{2m}1)}{\omega_n}\right)^{\frac{1}{i+1}}\frac{\int\limits_{S^{n-1}}gdx}{n\omega_n}\biggl)(x)dx \Biggl\}.
\end{multline*}
On the other hand, $$(\Pi^{2m}1)^i=\left(\frac{V_{i+1}(\Pi^{2m}1)}{\omega_n}\right)^{-\frac{1}{i+1}+1}, \quad\left(\frac{V(\Pi^{2m}1)}{\omega_n}\right)^{\frac{1}{n}}=\left(\frac{V_{i+1}(\Pi^{2m}1)}{\omega_n}\right)^{\frac{1}{1+i}}.$$ Substituting these back into the above identity completes the proof.

To find the $\operatorname{dim}\operatorname{Ker}D\mathcal{X}_{i,1}^{2m}\{0,\cdot\},$ note that \begin{align}\label{steiner}u\cdot\int\limits_{S^{n-1}}g(x)xdx=\pi_1g(u);\end{align} see \cite[p. 50]{Schneider}. Therefore,
\begin{align}\label{kernel}
D\mathcal{X}_{i,1}^{2m}\{0,g\}=\left\{
  \begin{array}{ll}
    0, & g\in \mathcal{S}^0\oplus\mathcal{S}^1 ; \\
    (\Pi^{2m}1)(1-i^{2m}v_{k,n}^{2m})\pi_kg, & g\in\mathcal{S}^k\quad k\ge 2.
  \end{array}
\right.
\end{align}
Thus $\operatorname{dim}\operatorname{Ker}D\mathcal{X}_{i,1}^{2m}\{0,\cdot\}=n+1.$
\end{proof}
\begin{lemma}\label{lem 4}
Suppose $m\ge4$ and $1< i<n-1.$ Given $h\in C^{2}(S^{n-1}) $ with $\pi_kh=0$ for $k= 0, 1$, there exists a unique $g\in C^{2}(S^{n-1}) $ with $\pi_kg=0$ for $k= 0, 1$ such that
$$g-i^{2m}\mathcal{R}^{2m} g=h.$$
\end{lemma}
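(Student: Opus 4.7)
The plan is to diagonalize the operator $I - i^{2m}\mathcal{R}^{2m}$ on the orthogonal complement of $\mathcal{S}^0 \oplus \mathcal{S}^1$ via spherical harmonics, solve the equation in $L^2$, and then bootstrap to $C^2$ using the smoothing property of $\mathcal{R}$. Since $\mathcal{R}Y_k = v_{k,n}Y_k$ for $Y_k \in \mathcal{S}^k$, the operator $\mathcal{R}^{2m}$ acts on $\mathcal{S}^k$ by the scalar $v_{k,n}^{2m}$, so the equation reads $(1 - i^{2m}v_{k,n}^{2m})\pi_k g = \pi_k h$ for every $k$. The imposed conditions $\pi_0 g = \pi_1 g = 0$ match the hypothesis $\pi_0 h = \pi_1 h = 0$, so the whole problem reduces to inverting the factor $\lambda_k := 1 - i^{2m}v_{k,n}^{2m}$ for $k \geq 2$.

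The key spectral estimate I would establish is that $\lambda_k$ is bounded away from zero uniformly in $k$. For odd $k$ this is immediate since $v_{k,n} = 0$ and $\lambda_k = 1$. For even $k \geq 2$, a direct inspection of the formula for $v_{k,n}$ gives the ratio $|v_{k+2,n}|/|v_{k,n}| = (k+1)/(n+k-1) < 1$ (as $n \geq 3$), so $|v_{k,n}|$ decreases in $k$ and attains its maximum $1/(n-1)$ at $k=2$. The hypothesis $i < n-1$ then yields
\[
 0 < 1 - \left(\frac{i}{n-1}\right)^{2m} \leq \lambda_k \leq 1
\]
uniformly in $k$. I would then define $g \in L^2(S^{n-1})$ by $\pi_k g := \lambda_k^{-1}\pi_k h$ for $k \geq 2$ and $\pi_0 g = \pi_1 g = 0$; the uniform bound on $\lambda_k^{-1}$ gives $\|g\|_2 \leq C\|h\|_2$, and by construction $g$ solves $g - i^{2m}\mathcal{R}^{2m}g = h$ in $L^2$.

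To upgrade $g$ from $L^2$ to $C^2$ I would use the identity $g - h = i^{2m}\mathcal{R}^{2m}g$ together with (\ref{STRICHARTZ R}): iterating that smoothing bound gives $\mathcal{R}^{2m}\colon L^2 \to H^{m(n-2)}(S^{n-1})$. Since $m \geq 4$ and $n \geq 3$, one checks $m(n-2) \geq 4n-8 > (n+3)/2 = 2 + (n-1)/2$, so the standard Sobolev embedding $H^s(S^{n-1}) \hookrightarrow C^2(S^{n-1})$ yields $i^{2m}\mathcal{R}^{2m}g \in C^2$. Combined with $h \in C^2$ this gives $g \in C^2$. Uniqueness is immediate from the diagonalization: any $\tilde g$ satisfying the homogeneous equation with $\pi_0 \tilde g = \pi_1 \tilde g = 0$ must have $\lambda_k \pi_k \tilde g = 0$ for every $k$, forcing $\tilde g = 0$.

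The main obstacle is the spectral estimate $\lambda_k \geq 1 - (i/(n-1))^{2m} > 0$, which is precisely where the hypothesis $i < n-1$ is used and where the monotonicity of $|v_{k,n}|$ must be verified carefully; once this is in hand the $L^2$-solvability and the $L^2$-to-$C^2$ bootstrap via Strichartz and Sobolev embedding are routine.
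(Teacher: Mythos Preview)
Your proposal is correct and follows essentially the same approach as the paper: diagonalize via spherical harmonics to define the $L^2$ solution, then use Strichartz's smoothing estimate $\mathcal{R}^{2m}:L^2\to H^{m(n-2)}$ together with Sobolev embedding to bootstrap to $C^2$ via the identity $g=h+i^{2m}\mathcal{R}^{2m}g$. Your explicit spectral bound $\lambda_k\ge 1-(i/(n-1))^{2m}>0$ (from the monotonicity of $|v_{k,n}|$) is a slight sharpening of the paper's argument, which only invokes $\lim_{k\to\infty}\lambda_k=1$ and leaves the non-vanishing of the finitely many remaining $\lambda_k$ implicit.
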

\begin{proof}
We develop $h$ into a series of spherical harmonics:
$h\sim \sum_{k\ge 2}^{\infty} \pi_{k}h.$
Since $L^2(S^{n-1})$ is a complete space and $\lim\limits_{k\to\infty }1-i^{2m}v_{k,n}^{2m}= 1$, the $L^2(S^{n-1})$-Cauchy sequence
\begin{align*}
\left\{f_l:=\sum_{k\ge 2}^{l} \frac{1}{1-i^{2m}v_{k,n}^{2m}}\pi_{k}h\right\}_l
\end{align*}
converges in the $L^2(S^{n-1})$-norm to a bounded $f\in L^2(S^{n-1})\cap \left(\mathcal{S}^0\oplus\mathcal{S}^1\right)^{\perp}$ with
$$\pi_{k}f=\frac{1}{1-i^{2m}v_{k,n}^{2m}}\pi_{k}h$$ for $k\ge 2.$ In view of (\ref{STRICHARTZ R}), $\mathcal{R}^{2m}f\in H_e^{m(n-2)}\subset H_e^{4(n-2)}\subset C^{2}(S^{n-1}).$
Define $$g:=h+i^{2m}\mathcal{R}^{2m} f.$$ Note that $g\in C^2(S^{n-1})\cap \left(\mathcal{S}^0\oplus\mathcal{S}^1\right)^{\perp}$ and for $k\ge 2:$
\[\pi_{k}g=\left(1+\frac{i^{2m}v_{k,n}^{2m}}{1-i^{2m}v_{k,n}^{2m}}\right)\pi_{k}h\Rightarrow\pi_{k}(g-i^{2m}\mathcal{R}^{2m}g)=\pi_{k}h.\]
Since $h$ and $g-i^{2m}\mathcal{R}^{2m}g$ are $C^2,$ we conclude that
\begin{align}\label{x}
g-i^{2m}\mathcal{R}^{2m}g=h
\end{align}
The uniqueness claim: Suppose $g_1,g_2\in C^{2}(S^{n-1})$ both solve (\ref{x}) with $\pi_kg_i=0$ for $k=0,1.$ Therefore, for $k\ge2,$
\[(1-i^{2m}v_{k,n}^{2m})\pi_kg_i=\pi_kh\Rightarrow \pi_kg_i=\frac{\pi_kh}{1-i^{2m}v_{k,n}^{2m}}\Rightarrow \pi_kg_1=\pi_kg_2\Rightarrow g_1=g_2. \]
\end{proof}
\begin{theorem}\label{main prop}
Suppose $m\ge 4$ and $1< i<n-1$ There exists $\varepsilon_{m}>0$, such that if $K$ satisfies
$\Pi^{2m}_iK=cK+\vec{v}$ for some $c>0$ and $\vec{v}\in \mathbb{R}^n$, and
$\|h_{\lambda K+\vec{a}}-1\|_{C^{2}}\leq \varepsilon_{m}$ for some $\lambda>0$ and $\vec{a}\in \mathbb{R}^n,$ then $K$ is a ball.
In particular, if $\Pi^{2}_iK=cK+\vec{v}$ for some $c>0$ and $\vec{v}\in \mathbb{R}^n$, and $\|h_{\lambda K+\vec{a}}-1\|_{C^{2}}\leq \varepsilon_{4}$ for some $\lambda>0$ and $\vec{a}\in \mathbb{R}^n,$ then $K$ is a ball.
\end{theorem}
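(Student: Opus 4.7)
My plan is to set up an inverse function theorem argument around the unit ball for the map $\mathcal{X}_{i,1}^{2m}$ introduced just before Lemma \ref{lem: prev}, with Lemma \ref{lem 4} supplying the bijectivity of its linearization on the complement of its kernel. The constraint $m\ge 4$ enters through Lemma \ref{lem 4} (Sobolev embedding in the worst case $n=3$). The special case $\Pi_i^{2}K=cK+\vec v$ will follow by iterating the relation using the homogeneity $\Pi_i(\lambda K)=\lambda^i \Pi_i K$ and translation-invariance $\Pi_i(K+\vec b)=\Pi_i K$ to obtain $\Pi_i^{2\cdot 4}K=c_{4}K+\vec v_{4}$, reducing to the main case with $m=4$.

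First I normalize. By the aforementioned homogeneity and translation-invariance, $K':=\lambda K+\vec a$ still satisfies $\Pi_i^{2m}K'=c'K'+\vec v'$ for some $c'>0$, $\vec v'\in\mathbb{R}^n$. Exploiting this two-parameter freedom, I pick $\lambda$ so that the mean value of $h_{K'}-1$ vanishes (so $\pi_0 g=0$, where $g:=h_{K'}-1$) and $\vec a$ so that the Steiner point of $K'$ is the origin (so $\pi_1 g=0$, by \eqref{steiner}). Since some $\lambda_0,\vec a_0$ with $\|h_{\lambda_0 K+\vec a_0}-1\|_{C^{2}}\le\varepsilon$ is given, the adjusted $g$ still has $C^2$-norm $\le C\varepsilon$, so I may assume $g\in B:=\{f\in C^2(S^{n-1}):\pi_0 f=\pi_1 f=0\}$ with $\|g\|_{C^{2}}$ arbitrarily small, and $\Pi_i^{2m}(1+g)=c'(1+g)+\vec v'\cdot u$. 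Since $\Pi_i^{2m}K'$ is origin-symmetric, its support function has vanishing $\pi_1$-component; applying $\pi_1$ to this equation gives $\vec v'\cdot u=-c'\pi_1 g=0$, hence $\vec v'=0$ and $\Pi_i^{2m}(1+g)=c'(1+g)$. Next, the translation-invariance of $V_{i+1}$ under addition of linear functions (from $A[\ell]=0$ together with $\int u\,\mathcal{Q}_i(f)\,du=0$) identifies the normalizing constant $c(g):=(V_{i+1}(\Pi_i^{2m}(1+g))/V_{i+1}(1+g))^{1/(i+1)}$ with $c'$, and substituting along with \eqref{steiner} and $\pi_1 g=0$ collapses the definition of $\mathcal{X}_{i,1}^{2m}$ to $\mathcal{X}_{i,1}^{2m}(g)=0$.

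Finally, let $\pi_\perp$ denote the bounded projection of $C^2(S^{n-1})$ onto $B$, and consider $\Psi:=\pi_\perp\circ\mathcal{X}_{i,1}^{2m}\big|_B : B\cap\tilde U_1\to B$. By Lemmas \ref{lemma reg} and \ref{lem 1}, $\Psi$ is $C^{\infty}$, and by Lemma \ref{lem: prev} its derivative at $0$ reduces on $B$ to $g\mapsto (\Pi^{2m}1)(g-i^{2m}\mathcal{R}^{2m}g)$, which maps $B$ into $B$ since $\mathcal{R}$ respects the spherical-harmonic decomposition and $v_{1,n}=0$. Lemma \ref{lem 4}, combined with \eqref{STRICHARTZ R} and the open mapping theorem, promotes this to a topological linear isomorphism $B\to B$. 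The Banach-space inverse function theorem then yields an $\varepsilon_m>0$ such that $\Psi$ is a diffeomorphism on the $\varepsilon_m$-ball in $B$; since $\mathcal{X}_{i,1}^{2m}(g)=0$ implies $\Psi(g)=0$, this forces $g=0$, whence $K'$ is the unit ball and $K$ is a ball.

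The step I expect to be most delicate is the collapse $\mathcal{X}_{i,1}^{2m}(g)=0$: although each of the three cancellations—origin-symmetry of $\Pi_i^{2m}K'$ killing $\vec v'$, linear-invariance of $V_{i+1}$ identifying $c(g)$ with $c'$, and the Steiner-point identity \eqref{steiner} handling the final integral term of $\mathcal{X}_{i,1}^{2m}$—is short in isolation, the affine adjustments must be chosen compatibly so that all three hold for the same $\lambda$ and $\vec a$. The analytic heart, invertibility of the linearization on $B$, is already done in Lemma \ref{lem 4}.
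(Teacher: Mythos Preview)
Your proof is correct and follows essentially the same strategy as the paper: linearize $\mathcal{X}_{i,1}^{2m}$ at the unit ball, invoke Lemma~\ref{lem 4} for bijectivity of the linearization on the complement of $\mathcal{S}^0\oplus\mathcal{S}^1$, and apply the Banach inverse function theorem. The only organizational difference is that you first normalize $K$ so that $\pi_0 g=\pi_1 g=0$ and then run the inverse function theorem on the closed subspace $B$, whereas the paper keeps the full $C^2(S^{n-1})$ and instead augments the map to $\mathcal{N}=\mathcal{X}_{i,1}^{2m}+(\pi_0+\pi_1)$, using the $(n{+}1)$-parameter family of balls to account for the kernel; both devices are standard and interchangeable here, and your reduction of the $\Pi_i^2$ case to $m=4$ via homogeneity and translation-invariance matches the paper's one-line remark that $\Pi_i^2K=cK+\vec v$ implies $\mathcal{X}_{i,1}^{8}(h_{\lambda K+\vec a}-1)=0$.
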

\begin{proof}
Fix $1< i<n-1$ and $m\geq 4.$ In this proof, $B$ always denotes a ball.
Consider the map
\begin{align*}
\mathcal{N}:\tilde{U}_1\subset C^2(S^{n-1})\to C^2(S^{n-1})\quad f\mapsto\mathcal{X}_{i,1}^{2m}(f)+(\pi_0+\pi_1)(f),\quad \mathcal{N}(0)=0.
\end{align*}
\begin{itemize}
  \item $\mathcal{N}$ is $C^{\infty}$.
  \item $D\mathcal{N}\{0,\cdot\}:C^2(S^{n-1})\to C^2(S^{n-1})$ is an invertible linear map:
  \begin{enumerate}
    \item By (\ref{kernel}), we have
    \begin{align*}D\mathcal{N}\{0,f\}&=\mathcal{X}_{i,1}^{2m}\{0,f\}+(\pi_0+\pi_1)f\\
    &=(\Pi^{2m}1)\sum_{k\geq 2}(1-i^{2m}v_{k,n}^{2m})\pi_kf+\pi_0f+\pi_1f.
        \end{align*}
        Thus, $D\mathcal{N}\{0,f\}=0$ implies that $f=0.$
    \item Given $h\in C^{2}(S^{n-1})$, by Lemma \ref{lem 4} there exists a unique $g\in C^{2}(S^{n-1})$ such that $$(\Pi^{2m}1)(g-i^{2m}\mathcal{R}^{2m}g)=h-(\pi_0+\pi_1)h$$ and $\pi_kg=0$ for $k=0,1$. Define $l=g+(\pi_0+\pi_1)h$. Then $l\in C^2(S^{n-1})$ and $D\mathcal{N}\{0,l\}=h$.
  \end{enumerate}

\end{itemize}
By the inverse function theorem (see, \cite[Theorem 5.2.3, Corollary 5.3.4]{Hamilton}), we can find neighborhoods $U,~W$ of $0$ in $C^{2}(S^{n-1})$, such that $\mathcal{N}: U\subset \tilde{U}_1\to W$
is a smooth diffeomorphism. Put $M:=\mathcal{N}^{-1}(W\cap \operatorname{Ker}D\mathcal{X}_{i,1}^{2m}\{0,\cdot\})$. Observe that
$$f\in U~\text{and}~\mathcal{X}_{i,1}^{2m}(f)=0\Rightarrow\mathcal{N}^{-1}((\pi_0+\pi_1) (f))=f\in M.$$
On the other hand, if $h_B-1\in U,$ then $\mathcal{X}_{i,1}^{2m}(h_B-1)=0$ and clearly $$W':=\{(\pi_0+\pi_1)(h_B-1);~ h_B-1\in U\}=\{h_B-1;~ h_B-1\in U\}$$ forms an open subset of $W\cap \operatorname{Ker}D\mathcal{X}_{i,1}^{2m}\{0,\cdot\}$ about the origin in $\mathbb{R}^{n+1}$.

Define the open set
\[W_1:=\{f\in W;(\pi_0+\pi_1)f\in W'\}.\]
Thus $W'=W_1\cap \operatorname{Ker}D\mathcal{X}_{i,1}^{2m}\{0,\cdot\}$ and $\mathcal{N}^{-1}(W')=M\cap \mathcal{N}^{-1}(W_1).$ Since $\mathcal{N}^{-1}(W_1)$ is an open neighborhood of $0$ in $C^2(S^{n-1})$ and $\mathcal{N}^{-1}(W')=\{h_B-1;~h_B-1\in U\},$ we conclude that in a $C^{2}$-neighborhood of $1$, the only solutions of $\mathcal{X}_{i,1}^{2m}(\cdot-1)=0$ are balls.

So far we have shown that there exists $\varepsilon_{m}>0$, such that if $K$ satisfies $\mathcal{X}_{i,1}^{2m}(h_K-1)=0$ and
 $\|h_{K}-1\|_{C^{2}}\leq \varepsilon_{m}$, then $K$ is a ball. To see that the first claim of the theorem holds, note that if $\Pi^{2m}_iK=cK+\vec{v}$, then
$\mathcal{X}_{i,1}^{2m}(h_{\lambda K+\vec{a}}-1)=0$; therefore, $K$ is a ball.

To prove the second statement of the theorem, note that $\Pi^2_iK=cK+\vec{v}$ yields $\mathcal{X}_{i,1}^{8}(h_{\lambda K+\vec{a}}-1)=0;$ therefore, $K$ is a ball.
\end{proof}
\section{The projection centroid conjectures}
For a convex body $K$ of class $C^2_+$, define
$$\Theta_i(h_K):=h_{\Gamma\Pi_i^{\ast}K}=\mathcal{C}((\Pi_ih_K)^{-(n+1)})=2^{n+1}\mathcal{C}\frac{1}{(\mathcal{C} \mathcal{Q}_i(h_K))^{n+1}}.$$ Recall that $\mathcal{C}\mathcal{Q}_i(h_K)=2\Pi_i h_K$ is the support function of an origin-symmetric convex body of class $C^2_+$ and consequently $2/\mathcal{C}\mathcal{Q}_i(h_K)$ is the radial function (restricted to $S^{n-1}$) of the corresponding polar body (which is also an origin-symmetric convex body of class $C^2_+$). Petty's regularity theorem for centroid bodies ensures $\Theta_i(h_K)$ is the support function of a convex body of class $C^2_+$. By induction, for any $m\in \mathbb{N},$ $$\Theta^m_i(h_K):=\underbrace{\Theta_i\circ\cdots\circ \Theta_i}_{m~\text{times}} (h_K)$$ is the support function of a convex body of class $C^2_+$.

Define the map
\begin{align*}
\mathcal{Y}_{i,1}^{m}&: \tilde{U}_1\subset C^2(S^{n-1})\to C^2(S^{n-1})\\
\mathcal{Y}_{i,1}^{m}(f)(u)= &\left(-\Theta_i^{m}(1+f)+\left(\frac{V(\Theta_i^m (1+f))}{V(1+f)}\right)^{\frac{1}{n}}(1+f)\right)(u)\\
&-u\cdot\int\limits_{S^{n-1}}\left(\frac{V(\Theta_i^m (1+f))}{V(1+f)}\right)^{\frac{1}{n}}(1+f)(x)xdx.
\end{align*}
Since $\Pi_i: C^2(S^{n-1})\to C^2(S^{n-1})$ is $C^{\infty},$ by the chain rule the map $\mathcal{Y}_{i,1}^m$ is $C^{\infty}$. 
Furthermore, we  calculate
\begin{align*}
T\Theta_i(h_K,g)&=2^{n+1}T\mathcal{C}\circ T \frac{1}{x^{n+1}}\circ T\mathcal{C}\circ T\mathcal{Q}_i(h_K,g)\\
&=2^{n+1}T\mathcal{C}\circ T \frac{1}{x^{n+1}}\circ T\mathcal{C}(\mathcal{Q}_i(h_K),iq_{i}(h_K,g))\\
&=2^{n+1}T\mathcal{C}\circ T \frac{1}{x^{n+1}}(\mathcal{C}\mathcal{Q}_i(h_K),i\mathcal{C}q_{i}(h_K,g))\\
&=2^{n+1}T\mathcal{C}(\frac{1}{\left(\mathcal{C}\mathcal{Q}_i(h_K)\right)^{n+1}},-i(n+1)\frac{\mathcal{C}q_{i}(h_K,g)}{\left(\mathcal{C}\mathcal{Q}_i(h_K)\right)^{n+2}})\\
&=(\Theta_i (h_K),-i(n+1)2^{n+1}\mathcal{C}\frac{\mathcal{C}q_{i}(h_K,g)}{\left(\mathcal{C}\mathcal{Q}_i(h_K)\right)^{n+2}})
\end{align*}
and
\begin{align}\label{cal}
T\Theta_i^2(h_K,g)&=T\Theta_i\circ T\Theta_i(h_K,g)\nonumber\\
&=T\Theta_i(\Theta_i (h_K),-i(n+1)2^{n+1}\mathcal{C}\frac{\mathcal{C}q_{i}(h_K,g)}{\left(\mathcal{C}\mathcal{Q}_i(h_K)\right)^{n+2}})\nonumber\\
&=(\Theta_i^2 (h_K),-i(n+1)2^{n+1}\mathcal{C}\frac{\mathcal{C}q_{i}(\Theta_i(h_K),-i(n+1)2^{n+1}\mathcal{C}\frac{\mathcal{C}q_{i}(h_K,g)}
{\left(\mathcal{C}\mathcal{Q}_i(h_K)\right)^{n+2}})}{\left(\mathcal{C}\mathcal{Q}_i(\Theta_i(h_K))\right)^{n+2}}).
\end{align}
\begin{lemma}\label{c1}
For any $g\in C^2(S^{n-1})$ we have
\begin{align*}
D\mathcal{Y}_{i,1}^{2}\{0,g\}(u)=&\Theta_i^2(1)\left(g(u)-\frac{i^2(n+1)^2}{4\omega_{n-1}^2}\mathcal{C}^2\mathcal{R}^{2} g(u)+\frac{i^2(n+1)^2-1}{n\omega_n}\int\limits_{S^{n-1}} gdx\right)\\
&-\Theta_i^2(1)u\cdot\int\limits_{S^{n-1}}g(x)xdx.
\end{align*}
Also, we have $$\operatorname{dim}\operatorname{Ker}D\mathcal{Y}_{i,1}^{2}\{0,\cdot\}=\left\{
               \begin{array}{ll}
                 \frac{n(n+3)}{2}, & i=n-1; \\
                 n+1, & 1\leq i<n-1.
               \end{array}
             \right.$$
\end{lemma}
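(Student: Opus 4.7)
The plan is to mirror the proof of Lemma \ref{lem: prev}, using equation (\ref{cal}) (which supplies $T\Theta_i^2$) in place of Lemma \ref{lem 1}. I will first specialize (\ref{cal}) to $h_K = 1$. The simplifications $\mathcal{Q}_i(1) = 1$, $\mathcal{C}\mathcal{Q}_i(1) = 2\omega_{n-1}$, together with the fact that $\Theta_i(1) = 2/\omega_{n-1}^n$ is a positive constant, reduce the nested expression drastically. The only nontrivial piece arises from $q_i(1, g) = \mathcal{Q}(g,1,\ldots,1) = \frac{1}{n-1}\Box g$, after which (\ref{C and R}) yields $\mathcal{C}q_i(1, g) = 2\omega_{n-1}\mathcal{R}g$. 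Feeding this back into the outer layer of (\ref{cal})---using $q_i(c,\cdot) = c^{i-1}\frac{1}{n-1}\Box(\cdot)$ for any constant $c$ and a second application of (\ref{C and R})---produces
\[\frac{d}{dt}\Big|_{t=0}\Theta_i^2(1+tg) = \Theta_i^2(1)\cdot\frac{i^2(n+1)^2}{4\omega_{n-1}^2}\,\mathcal{C}^2\mathcal{R}^2 g,\]
once the accumulated powers of $\Theta_i(1)$, $2$, and $\omega_{n-1}$ telescope via $\Theta_i(1) = 2/\omega_{n-1}^n$.

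With this linearization in hand, I differentiate the remaining pieces of $\mathcal{Y}_{i,1}^2(f)(u)$. For the volume ratio $\alpha(f) = (V(\Theta_i^2(1+f))/V(1+f))^{1/n}$, I apply the standard first variation $\frac{d}{dt}|_{t=0}V(h+tg) = \int g\,dS_{n-1}(h)$, which at $h=1$ gives $\int g\,dx$ and at the constant $h=\Theta_i^2(1)$ gives $\Theta_i^2(1)^{n-1}\int G_2\,dx$ with $G_2 = D\Theta_i^2\{1, g\}$. Self-adjointness of $\mathcal{C}$ and $\mathcal{R}$ together with $\mathcal{C}(1) = 2\omega_{n-1}$ and $\mathcal{R}(1) = 1$ collapse $\int \mathcal{C}^2\mathcal{R}^2 g\,dx = 4\omega_{n-1}^2\int g\,dx$, and a short algebraic cleanup yields the coefficient $\Theta_i^2(1)(i^2(n+1)^2-1)/(n\omega_n)$ in front of $\int g\,dx$. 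The factor $\alpha(f)(1+f)$ contributes $\Theta_i^2(1)\,g(u)$ at leading order, and the Steiner-type correction differentiates to $-\Theta_i^2(1)\,u\cdot\int g(x)\,x\,dx$ (the constant $D\alpha\{0,g\}$ piece drops because $\int x\,dx = 0$). Assembling the four contributions reproduces the displayed formula.

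The kernel dimension then follows by testing $D\mathcal{Y}_{i,1}^2\{0,\cdot\}$ on each spherical harmonic space. For $g\in\mathcal{S}^k$ with $k \geq 2$, the two integral terms vanish and $D\mathcal{Y}_{i,1}^2\{0,g\}$ reduces to multiplication by the scalar
\[\Theta_i^2(1)\left(1 - \frac{i^2(n+1)^2}{4\omega_{n-1}^2}\,v_{k,n}^2 w_{k,n}^2\right).\]
On $\mathcal{S}^0$ the identity $\mathcal{C}^2\mathcal{R}^2(1) = 4\omega_{n-1}^2$ forces an exact cancellation among the three inner terms; on $\mathcal{S}^1$ the vanishings $\mathcal{R}Y_1 = 0 = \mathcal{C}Y_1$ together with (\ref{steiner}) cancel $g(u)$ against the $u$-integral. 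Hence $\mathcal{S}^0\oplus\mathcal{S}^1 \subset \operatorname{Ker}D\mathcal{Y}_{i,1}^2\{0,\cdot\}$ contributes dimension $n+1$. For odd $k \geq 3$, $v_{k,n} = 0$ forces the eigenvalue to be $1$. Substituting $v_{2,n}=-1/(n-1)$ and $w_{2,n}=2\omega_{n-1}/(n+1)$ reduces the $k=2$ eigenvalue to $1 - i^2/(n-1)^2$, which vanishes exactly when $i = n-1$. For even $k \geq 4$, the explicit product formulas force $|v_{k,n}w_{k,n}| < |v_{2,n}w_{2,n}|$ (each additional factor is strictly less than $1$ in absolute value), so the eigenvalue strictly exceeds $1 - i^2/(n-1)^2 \geq 0$ in the entire range $1 \leq i \leq n-1$. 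Adding $\operatorname{dim}\mathcal{S}^2 = (n+2)(n-1)/2$ in the case $i = n-1$ recovers the total $n(n+3)/2$.

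The main bookkeeping hurdle is threading the nested constants through (\ref{cal}): the various powers of $\Theta_i(1)$, $2$, and $\omega_{n-1}$ must telescope to produce the clean coefficient $\Theta_i^2(1)\,i^2(n+1)^2/(4\omega_{n-1}^2)$, and the explicit value $\Theta_i(1) = 2/\omega_{n-1}^n$ must be invoked at the right moment. Beyond this, the argument is routine and parallels Lemma \ref{lem: prev} step for step.
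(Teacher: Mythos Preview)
Your proposal is correct and follows essentially the same route as the paper's proof: both specialize equation (\ref{cal}) at $h_K=1$ using $\Theta_i(1)=2/\omega_{n-1}^n$, $q_i(1,g)=\frac{1}{n-1}\Box g$, and (\ref{C and R}) to obtain $D\Theta_i^2\{1,g\}=\Theta_i^2(1)\frac{i^2(n+1)^2}{4\omega_{n-1}^2}\mathcal{C}^2\mathcal{R}^2g$, then differentiate the volume ratio and Steiner term, and finally read off the kernel from the eigenvalues $1-\frac{i^2(n+1)^2}{4\omega_{n-1}^2}v_{k,n}^2w_{k,n}^2$. Your treatment of the kernel for even $k\ge4$ (via the strict decrease of $|v_{k,n}w_{k,n}|$) is a bit more explicit than the paper's one-line assertion, but the argument is otherwise the same.
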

\begin{proof}
Note that $\Theta_i(h_{\lambda K})=\frac{1}{\lambda^{i(n+1)}}\Theta_i(h_{K}).$ Hence we get
$$\Theta_i^2(1)=\Theta_i(\Theta_i(1))=\frac{1}{(\Theta_i(1))^{i(n+1)-1}}.$$
In view of $\Theta_i(1)=2/\omega_{n-1}^n$ (e.q., $\mathcal{C}1=2\omega_{n-1}$) and the identity (\ref{cal}) for $h_K\equiv1$ we obtain
\begin{align}\label{a}
\frac{1}{\Theta_i^2(1)}\frac{d}{dt}\Big|_{t=0}\Theta_i^{2}(1+tg)=&\frac{1}{\Theta_i^2(1)}\frac{i^22^{2n+4}(n+1)^2(\Theta_i(1))^{i-1}\omega_{n-1}^2}{(\Theta_i(1))^{i(n+2)}(2\omega_{n-1})^{2n+4}}\mathcal{C}^2\mathcal{R}^2g\nonumber\\
=&\frac{i^2(n+1)^2}{(\Theta_i(1))^{2}(\omega_{n-1})^{2n+2}}\mathcal{C}^2\mathcal{R}^2g\nonumber\\
=&\frac{i^2(n+1)^2}{4\omega_{n-1}^2}\mathcal{C}^2\mathcal{R}^2g.
\end{align}
Using this last expression, we  calculate
\begin{align*}
\frac{1}{\Theta_i^2(1)}\frac{d}{dt}\Big|_{t=0}V(\Theta_i^{2}(1+tg))&=\int\limits_{S^{n-1}}\frac{i^2(n+1)^2}{4\omega_{n-1}^2}\mathcal{C}^2\mathcal{R}^2g\mathcal{Q}(\Theta_i^2(1))dx\\
&=(\Theta_i^2(1))^{n-1}i^2(n+1)^2\int\limits_{S^{n-1}}gdx.
\end{align*}
Thus
\begin{align}\label{b}
\frac{1}{\Theta_i^2(1)}\frac{d}{dt}\Big|_{t=0}\left(\frac{V(\Theta_i^2 (1+tg))}{V(1+tg)}\right)^{\frac{1}{n}}=\frac{i^2(n+1)^2-1}{n\omega_n}\int\limits_{S^{n-1}} gdx.
\end{align}
Putting (\ref{a}), (\ref{b}) together yields the explicit expression for $D\mathcal{Y}_{i,1}^{2}\{0,g\}.$
In view of (\ref{steiner}), it is easy to check that $D\mathcal{Y}_{i,1}^{2}\{0,g\}=0$ for all $g\in \mathcal{S}^0\oplus\mathcal{S}^1.$ Also, from $\mathcal{C}g=\frac{2\omega_{n-1}}{n+1}g$ and $\mathcal{R}g=-\frac{1}{n-1}g$ for all $g\in \mathcal{S}^2,$ it follows that
$\mathcal{S}^0\oplus\mathcal{S}^1\oplus\mathcal{S}^2\subset \operatorname{Ker}D\mathcal{Y}_{n-1,1}^{2}\{0,\cdot\}.$
To complete the proof, note that if $k\ge3$ and $i=n-1$ or $k\ge2$ and $i<n-1$, then $1-\frac{i^2(n+1)^2}{4\omega_{n-1}^2}v_{k,n}^2w_{k,n}^2\neq 0$.
\end{proof}
\begin{lemma}\label{c2}The following statements hold.
\begin{enumerate}
  \item Given $h\in C^{2}(S^{n-1}) $ with $\pi_kh=0$ for $k= 0,1,2$, there exists a unique $g\in C^{2}(S^{n-1})$ with $\pi_kg=0$ for $k= 0,1,2$ such that
$$g-\frac{(n-1)^2(n+1)^2}{4\omega_{n-1}^2}\mathcal{C}^2\mathcal{R}^{2} g=h.$$
  \item Suppose $1\leq i<n-1$. Given $h\in C^{2}(S^{n-1}) $ with $\pi_kh=0$ for $k=0,1$, there exists a unique $g\in C^{2}(S^{n-1}) $ with $\pi_kg=0$ for $k= 0,1$ such that
$$g-\frac{i^2(n+1)^2}{4\omega_{n-1}^2}\mathcal{C}^2\mathcal{R}^{2} g=h.$$
\end{enumerate}
\end{lemma}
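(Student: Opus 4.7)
The plan is to imitate the proof of Lemma \ref{lem 4}, replacing $\operatorname{id} - i^{2m}\mathcal{R}^{2m}$ by $L := \operatorname{id} - c\,\mathcal{C}^2\mathcal{R}^2$, where $c$ is the relevant constant in each statement. Because $\mathcal{C}$ and $\mathcal{R}$ are simultaneously diagonalized by spherical harmonics, $L$ acts on $\mathcal{S}^k$ as multiplication by the scalar $\mu_k := 1 - c\, v_{k,n}^2 w_{k,n}^2$. The entire proof reduces to (i) checking that $\mu_k \neq 0$ on the relevant degrees and that $\mu_k \to 1$, and (ii) invoking the Strichartz smoothing estimate to promote $L^2$-regularity of the candidate solution to $C^2$.

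For the eigenvalue analysis, $\mu_k = 1$ whenever $k$ is odd (since then $v_{k,n}=w_{k,n}=0$). For $k = 2$ one computes $v_{2,n}^2 w_{2,n}^2 = 4\omega_{n-1}^2/((n-1)^2(n+1)^2)$, so in case (1) we get $\mu_2 = 0$ (precisely why $\mathcal{S}^2$ is excluded from the data), whereas in case (2) we obtain $\mu_2 = 1 - i^2/(n-1)^2 > 0$ because $i<n-1$. For even $k \ge 4$, the explicit formulas for $v_{k,n}$ and $w_{k,n}$ yield the telescoping identity
\begin{equation*}
\frac{v_{k,n}^2 w_{k,n}^2}{v_{k-2,n}^2 w_{k-2,n}^2} = \frac{(k-1)^2(k-3)^2}{(n+k-3)^2(n+k-1)^2} < 1,
\end{equation*}
so $v_{k,n}^2 w_{k,n}^2$ is strictly decreasing in even $k \ge 2$ and tends to $0$. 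In particular $c\, v_{k,n}^2 w_{k,n}^2 \le c\, v_{2,n}^2 w_{2,n}^2 \le 1$, with strict inequality once $k\ge 4$, in both cases; hence $\mu_k > 0$ on the allowed degrees and $\mu_k \to 1$.

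Given $h\in C^2(S^{n-1})$ with the prescribed vanishing components, I would define $f\in L^2(S^{n-1})$ by setting $\pi_k f := \mu_k^{-1}\pi_k h$ on the admissible range and zero otherwise; the estimate $\sum_k \|\pi_k f\|_2^2 \lesssim \sum_k \|\pi_k h\|_2^2 < \infty$ follows because $\mu_k$ is bounded away from zero. The candidate solution is then $g := h + c\,\mathcal{C}^2\mathcal{R}^2 f$, which matches $h$ componentwise under $L$ on the admissible degrees and is zero on the inadmissible ones. Its regularity comes from two applications of (\ref{STRICHARTZ R}): $f\in H^0$ implies $\mathcal{C}^2\mathcal{R}^2 f \in H^{(n-2)+(n+2)} = H^{2n}$, and Sobolev embedding $H^{2n}(S^{n-1})\hookrightarrow C^2(S^{n-1})$ (valid for $n\ge 2$) gives $g\in C^2(S^{n-1})$. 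Uniqueness is forced componentwise: any two solutions with the prescribed vanishing components satisfy $\mu_k \pi_k g_1 = \pi_k h = \mu_k \pi_k g_2$ for all admissible $k$, so $g_1 = g_2$.

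The only non-routine ingredient is the eigenvalue bookkeeping of the second paragraph. Case (1) is the delicate one, since the constant $c = (n-1)^2(n+1)^2/(4\omega_{n-1}^2)$ is exactly tuned to make $\mu_2$ degenerate; it is the strict monotonicity of $v_{k,n}^2 w_{k,n}^2$ in $k$ that rescues $\mu_k\neq 0$ for all $k\ge 4$. Once that monotonicity is established, both parts of the lemma follow from a single scheme in parallel.
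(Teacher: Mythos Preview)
Your argument is correct and follows essentially the same route as the paper's proof: define the formal inverse on the harmonic side, use the Strichartz estimates to place $\mathcal{C}^2\mathcal{R}^2 f$ in $H^{2n}\subset C^2$, and set $g=h+c\,\mathcal{C}^2\mathcal{R}^2 f$. You supply more detail than the paper does on the eigenvalue bookkeeping---in particular the monotonicity of $v_{k,n}^2 w_{k,n}^2$ via the ratio $(k-1)^2(k-3)^2/((n+k-3)^2(n+k-1)^2)$---which the paper leaves implicit (it simply asserts the nonvanishing at the end of Lemma~\ref{c1}).
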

\begin{proof}
We only give the proof of the first claim.
Develop $h$ into a series of spherical harmonics:
$h\sim \sum_{k\ge 3}^{\infty} \pi_{k}h.$
The $L^2(S^{n-1})$-Cauchy sequence
\begin{align*}
\left\{f_l:=\sum_{k\ge 3}^{l} \frac{1}{1-\frac{(n-1)^2(n+1)^2}{4\omega_{n-1}^2}v_{k,n}^{2}w_{k,n}^2}\pi_{k}h\right\}_l
\end{align*}
converges in the $L^2(S^{n-1})$-norm to a bounded $f\in L^2(S^{n-1})\cap \left(\mathcal{S}^0\oplus\mathcal{S}^1\oplus\mathcal{S}^2\right)^{\perp}$ with
$$\pi_{k}f=\frac{1}{1-\frac{(n-1)^2(n+1)^2}{4\omega_{n-1}^2}v_{k,n}^{2}w_{k,n}^2}\pi_{k}h$$ for $k\ge 3.$ In view of (\ref{STRICHARTZ R}), $\mathcal{C}^2\mathcal{R}^{2}f\in H_e^{2n}\subset C^{2}(S^{n-1}).$
Set $$g:=h+\frac{(n-1)^2(n+1)^2}{4\omega_{n-1}^2}\mathcal{C}^2\mathcal{R}^{2} f.$$ We have $g\in C^2(S^{n-1})\cap \left(\mathcal{S}^0\oplus\mathcal{S}^1\oplus\mathcal{S}^2\right)^{\perp}$ and for $k\ge 3$
\[\pi_{k}g=\left(1+\frac{\frac{(n-1)^2(n+1)^2}{4\omega_{n-1}^2}v_{k,n}^{2}w_{k,n}^2}{1-\frac{(n-1)^2(n+1)^2}{4\omega_{n-1}^2}v_{k,n}^{2}w_{k,n}^2}\right)\pi_{k}h.\]
Therefore,
\[\pi_{k}(g-\frac{(n-1)^2(n+1)^2}{4\omega_{n-1}^2}\mathcal{C}^2\mathcal{R}^{2}g)=\pi_{k}h.\]
Since $h,g-\frac{(n-1)^2(n+1)^2}{4\omega_{n-1}^2}\mathcal{C}^2\mathcal{R}^{2}g\in C^2(S^{n-1}),$ we obtain
$$g-\frac{(n-1)^2(n+1)^2}{4\omega_{n-1}^2}\mathcal{C}^2\mathcal{R}^{2}g=h.$$
The uniqueness claim is trivial.
\end{proof}
Given Lemmas \ref{c1} and \ref{c2}, proofs of the first and second statements in Theorem \ref{thm2} are straightforward; if $1\leq i<n-1$, we precisely follow the proof of Theorem \ref{thm1} and for the case $i=n-1$, we refer the reader to the proof of \cite[Theorem 4.2]{Ivaki}.

All constants $c_i$ that follow are positive. We proceed to the proof of the third statement. Note that for any $L\in \mathrm{K}^n_e:$
\begin{align*}
\Gamma L=2\Pi \Lambda L^{\ast}, \quad \Gamma \psi K=\psi\Gamma K~ \text{for any} ~\psi\in\operatorname{Sl}_n.
\end{align*}
We may assume without loss of generality that $\phi\in\operatorname{Sl}_n$. We have
\[(\Pi\Gamma\phi K)^{\ast}=c_1\phi K\Rightarrow \Pi\phi\Gamma  K=\frac{1}{c_1}(\phi K)^{\ast}.\]
Therefore,
\[\Pi\Lambda\Pi\phi\Gamma K=c_2\Pi\Lambda (\phi K)^{\ast}=\frac{c_2}{2}\Gamma\phi K\Rightarrow \Gamma\Pi^{\ast}(\phi\Gamma K)=c_3\phi\Gamma K.\]
On the other hand, if $\delta$ is small enough, then from $\|\rho_{\phi K}-1\|\leq \delta$ it follows that
$\|\rho_{\phi K}^{n+1}-1\|\leq 2\delta.$ Thus by \cite[Ineq. (2.3)]{Ivaki},
\[\|h_{\frac{1}{2\omega_{n-1}}\phi \Gamma K}-1\|_{C^2}=\frac{1}{2\omega_{n-1}}\|\mathcal{C}\rho_{\phi K}^{n+1}-\mathcal{C}1\|_{C^2}\leq c_n\delta.\]
In summary, we have shown that $\Gamma\Pi^{\ast}(\frac{1}{2\omega_{n-1}}\phi\Gamma K)=\frac{c_4}{2\omega_{n-1}}\phi\Gamma K$ and $h_{\frac{1}{2\omega_{n-1}}\phi\Gamma K}$ satisfies the assumption (1) in Theorem \ref{thm2} provided $\delta$ is small enough. Hence $\phi\Gamma K$ is an origin-centered ellipsoid. Since $\Pi\phi\Gamma K=\frac{1}{c_1}(\phi K)^{\ast},$ $K$ is an origin-centered ellipsoid.

\bibliographystyle{amsplain}

\begin{thebibliography}{10}
\bibitem{An} B. Andrews, ``Entropy inequalities for evolving hypersurfaces," Comm. Anal. Geom. 2(1994), 53--64.
\bibitem{bon} T. Bonnesen, W. Fenchel, ``Theorie der konvexen K\"{o}rper," Springer-Verlag, Berlin, 1934.
\bibitem{FNRZ}A. Fish, F. Nazarov, D. Ryabogin, A. Zvavitch, ``The unit ball is an attractor of the intersection body operator," Adv. in Math. 226(2011), 2629--2642.
\bibitem{Gardner} R. Gardner, ``Geometric Tomography," Vol. 6. Cambridge: Cambridge University Press, 2006.
\bibitem{GZ} E. Grinberg and G. Zhang, ``Convolutions, transforms, and convex bodies," Proc. London Math. Soc. 78(1999), 77--115.
\bibitem{GG} P.R. Goodey, H. Groemer, ``Stability results for first order projection bodies," Proc. Amer. Math. Soc. 109(1990), 1103--1114.
\bibitem{Goodey} P.R. Goodey, W. Weil, ``Centrally symmetric convex bodies and the spherical Radon transform," J. Differ. Geom. 5(1992), 675--688.
\bibitem{Groemer}H. Groemer, ``Geometric applications of Fourier series and spherical harmonics," Vol. 61. Cambridge University Press, 1996.
\bibitem{Hamilton} R.S. Hamilton, ``The inverse function theorem of Nash and Moser," Bull. Amer. Math. Soc. (N.S.) 7(1982), 65--222.
\bibitem{Helgason} S. Helgason, ``Integral Geometry and Radon Transforms," Springer, New York, 2011.
\bibitem{Ivaki} M.N. Ivaki, ``A local uniqueness theorem for minimizers of Petty's conjectured projection inequality," preprint arXiv:1610.03796v1, (2016).
\bibitem{Petty1} C.M. Petty, ``Affine isoperimetric problems," Annals of the New York Academy of Sciences 440(1985), 113-127.
\bibitem{Petty} C.M. Petty, ``Centroid surfaces," Pacific J. Math. 11(1961), 1535--1547.
\bibitem{Lut3} E. Lutwak, ``On some affine isoperimetric inequalities," J. Differential Geom 23(1986), 1--13.
\bibitem{lutn} E. Lutwak, ``Centroid bodies and dual mixed volumes," Proc. London Math. Soc. 60(1990), 365--391.
\bibitem{Lut0} E. Lutwak, ``On a conjectured projection inequality of Petty," Contemp. Math. 113(1990), 171--182.
\bibitem{Lut}E. Lutwak, ``On quermassintegrals of mixed projection bodies," Geom. Dedicata 33(1990), 51--58.
\bibitem{Lut1} E. Lutwak, ``Selected affine isoperimetric inequalities," In Handbook of Convex Geometry, ed. by P.M. Gruber and J.M. Wills. North-Holland, Amsterdam, 1993, pp. 151--176.
\bibitem{Yves} Y. Martinez-Maure, ``Hedgehogs and zonoids," Adv. in Math. 158(2001), 1--17.
\bibitem{SZ} C. Saroglou, A. Zvavitch, ``Iterations of the projection body operator and a remark on Petty's conjectured projection inequality," J. Funct. Anal. (2016),  DOI:10.1016/j.jfa.2016.08.015.
\bibitem{Sch} R. Schneider, ``Rekonstruktion eines konvexen Korpers aus seinen Projektionen," Math. Nachr. 79(1977), 325-329
\bibitem{Schneider} R. Schneider, ``Convex bodies: the Brunn--Minkowski theory," No. 151. Cambridge University Press, 2013.
\bibitem{STRICHARTZ} R.S. Strichartz, ``$L^p$ estimates for Radon transforms in Euclidean and non-Euclidean spaces," Duke Math. J. 48(1981), 699--727.
\bibitem{W1} W. Weil, ``\"{U}ber die Projektionenk\"{o}rper konvexer Polytope," Arch. Math.  22(1971): 664--672.
\end{thebibliography}

\end{document}